\newcommand{\pr}{\mathbb{P}}
\newcommand{\E}{\mathbb{E}}
\renewcommand{\d}{\mathrm{d}}
\newcommand{\erg}{Erd\H os--R\'enyi graph}
\newcommand\Lleq{\stackrel{\mathcal{L}^N_M}{ \leq}}
\newtheorem{theorem}{Theorem}
\newtheorem{lemma}{Lemma}
\DeclareMathOperator{\cov}{cov}
\title{Mean field limit of local density-dependent Markov processes on weighted \erg s
}
\author{
	D\'aniel Keliger\\
	{\small Department of Stochastics,}\\
	{\small Budapest University of Technology and Economics}\\
	{\small e-mail: perfectumfluidum@gmail.com}\\[3mm]
	Ill\'es Horv\'ath\\
	{\small MTA-BME Information Systems Research Group}\\
	{\small e-mail: horvath.illes.antal@gmail.com}
}
\date{}
\begin{document}
	
	
	\maketitle
	
\begin{abstract}
We study the asymptotic behaviour of Markov processes on large weighted \erg s  where the transition rates of the vertices are only influenced by the state of their neighbours and the corresponding weight on the edges. We find the ratio of vertices being in a certain state will converge to the solution of a differential equation obtained from mean field approximation if the graph is dense enough, namely, the average degree is at least of order $N^{\frac{1}{2}+\epsilon}$. Proof for convergence in probability in the transient regime is shown. 

Keywords: \erg, density-dependent Markov process, local density-dependent Markov process, mean-field limit
\end{abstract}

\section{Introduction}
\label{s:introduction}

In the 1970's there was an interest in examining density dependent Markov processes. These stochastic processes have the property that the transition rates of individual components depend on the empirical distribution of other components (the density within the entire population). This way, the individual components interact only through the global average.

The area was pioneered by Kurtz, who proved that as the number of components increases, in the transient regime the process converges to the solution of a system of differential equations \cite{kurtz70}. He also proved that the error is $O\left( \frac{1}{\sqrt{N}} \right)$ where $N$ is the number of components and also proved a version of the central limit theorem \cite{kurtz78}.

These processes are widely popular till this day as they provide a wide range of applications, including  chemical reactions and queuing systems \cite{kurtz2011, telek2008}. However, their applicability is limited by the assumption that components interact with the global average.

Since then, investigations have been extended to deterministic and stochastic processes on large networks where the vertices interact only through their neighbours. One large class of such processes is the \emph{local density dependent Markov population processes}, the natural local version of the density dependent Markov process, where the evolution of each component is governed by the empirical distribution of its neighbours instead of the entire population.

The underlying graph structure turns out to be very important for the mean-field limit. The structure of real-life networks is often modelled by random graphs such as the \erg, the configuration model, random regular graphs etc. Depending on the structure of the network and the process, the evolution of the population process may not have an explicit closed form, so approximation is required.

The limit process provided by Kurtz is commonly used as a rough approximation, referred to as the \emph{homogeneous mean-field approximation} in this paper. This approximation ignores the structure completely, operating under the assumption that the distribution of the neighbourhood of each vertex is close to the global distribution. The validity of this assumption largely depends on whether the population process is sufficiently mixed. It has been known to fail in certain models, notably the SIR process in sparse graphs \cite{volz,volzproof}, where the mean-field limit is different from the homogeneous mean-field approximation.

In the present paper, we address the case of the \erg, and prove transient convergence to the homogeneous mean-field limit of Kurtz provided that the graph is sufficiently dense.

The rest of the paper is structured as the following: Section \ref{s:related} details related work, including other approximations and results where either mean-field convergence or bounds are shown in a rigorous way. Section \ref{s:setup} contains the mathematical framework for the model of the present paper along with important notation. The main result with proof is presented in Section \ref{s:simple} for a simple setup and \ref{s:general} for a general setup, and Section \ref{s:concl} concludes the work.

\section{Related works}
\label{s:related}

Kurtz proved several general results \cite{kurtz70,Kurtz1978} for density dependent Markov-processes as stated in Section \ref{s:introduction}.

A different approximation called \emph{heterogeneous mean-field approximation} is introduced in \cite{vesp}, taking into account the degree distribution and degree correlation of the network.

A lot of attention went to studying diffusion phenomena like disease spreading (e.g. the SIR model \cite{sir}) and opinion dynamics \cite{vesp, opinion}. For the SIR process on sparse graphs generated by the configuration model, the approach of Volz turned out to be exact \cite{volz, volzproof}.

Another approach is called the quenched mean field approximation or N-intertwined mean field approximation (NIMFA). This approach takes the full network topology into account, and approximates the evolution of each node by its expectation. Unfortunately the resulting system of differential equations is not closed, so standard differential equation techniques cannot be used. The standard approach \cite{nodemeanfield} is to assume the dynamical correlation is small. Overall, NIMFA requires $O(N)$ differential equations to be solved numerically for a network of $N$ vertices, which means that for the transient regime it can only be used for relatively small graphs.

Despite using the full topology of the network, few rigorous results are known about when NIMFA performs well. One important example is when dynamical correlations are nonnegative, in which case NIMFA can be used to make pessimistic predictions about prevalence of diseases on moderate sized networks \cite{simon2017NIMFA}.

Finally, a recent result by Xue \cite{vertexweight} showed that the SIR process on dense \erg s with bounded vertex weights can be described by a small set of differential equations.

\section{Setup of the model}
\label{s:setup}

We first set up the local density-dependent Markov process, then the homogeneous mean-field approximation, and finally the NIMFA approximation.

\subsubsection*{Local density-dependent Markov process}

$G^N$ will denote an \erg{} with $N$ vertices (numbered from $1$ to $N$) with random weights $a_{ij}^N$ which describe the strength of the connection between vertices $i$ and $j$.

We assume $G^N$ is undirected and has no loops, which means, we have $a_{ii}^N=0 $ and $a_{ij}^N=a_{ji}^N$.

We decompose $a_{ij}^N$ into the product
$$a_{ij}^N=w_{ij}^Nb_{ij}^N,$$
where the $w_{ij}^N$'s and $b_{ij}^N$'s are independent; $b_{ij}^N$ is the indicator of whether a link is present between $i$ and $j$, and $w_{ij}^N$ stands for the weight of that link. We assume $w_{ij}^N$ are i.i.d. for $i<j$ with a common nonnegative real distribution independent from $N$, and $b_{ij}^N$ are also i.i.d. for $i<j$, with $\pr(b_{ij}^N=1)=p^N$ allowing for scaling with $N$.

The unweighted {\erg} is obtained by setting $w_{ij}^N=1$.

We assume that the distribution of $w_{ij}^N$ has a finite mean $\mu$ and variance $\sigma^2$.

By the law of total expectation it is easy to see that
\begin{align*}
&\tilde{\mu}:=\E\left( a_{ij}^N\right)=p^N \mu \\
& \tilde{\sigma}^2:=\mathbb{D}^2 \left( a_{ij}^N\right)=p^N\left(\sigma^2+\mu^2\right)-\left(p^N \right)^2 \mu^2.
\end{align*}

The sum of the weights originating from vertex $i$ is denoted by
$$d^N(i)=\sum_{j=1}^Na_{ij}^N.$$
(In the unweighted case $d^N(i)$ corresponds to the degree of vertex $i$.)

Clearly
\begin{align*}
\E \left(d^N(i)\right)=(N-1)\tilde{\mu}=:\langle d \rangle. 
\end{align*}

$G^N$ serves as a random environment for a local density-dependent Markov process as described below. 

Each vertex can be in one of the states from the local state space $\mathcal{S}=\{1,...,S\}$. The \emph{global state} of the system is defined as the total number of vertices in each state, that is, a vector $X^N\in \{0,1,\dots,N\}^{S}$ with $X^N_1+\dots+X^N_{S}=N$. The normalized global state, which records the ratio of vertices in each state, is simply $x^N=\frac{X^N}{N}$.

In the classic density-dependent mean field setup of Kurtz \cite{kurtz70,kurtz78}, each vertex changes its local state according to a continuous time Markov process with rates depending on $x^N$. If $\xi_{s,i}^N(t)$ denotes the indicator that vertex $i$ is in state $s$ at time $t$, then the normalized global state vector $x^N(t)=(x_s^N(t))_{s\in\mathcal{S}}$ at time $t$ can be written as
\begin{align}
\label{eq:xdef}
x_s^N=x_s^N(t)=\frac1N \sum_{i=1}^N\xi_{s,i}^N(t)
\end{align}
and the rate of vertex $i$ transitioning from state $s$ to $k$ is $Q_{ks}=Q_{ks}\left( x^N\right)$. We will also use the notation $\xi_{i}^N(t)= \left(\xi_{s,i}^N(t)\right)_{s\in\mathcal{S}}$.

We note that for a general $G^N$ graph, $x^N(t)$ is not a Markov process. The entire collection of $\xi_{i}^N(t)$'s (referred to as the \emph{exact system}) is a Markov process, but on a state space exponentially large in $N$.

In the local density-dependent version of this process, we look to set the transition rates of each vertex depend on the weighted average of its neighbours with weights $a_{ij}^N$ instead of the simple average of the entire system of Kurtz. In this setting, vertices connected with an edge with larger weights have a stronger influence on each other. Also, the weighted averages could be different for different vertices, hence, they have a different ``perception'' of the global state. In the unweighted case, this means that the transition rates of a certain vertex depend only on the state of its neighbours. This motivates the definition of the vector
\begin{align*}
\phi_{i}^N(t):=\frac{1}{\langle d \rangle}\sum_{j=1}^Na_{ij}^N \xi_{j}^N(t).
\end{align*}
For technical simplicity, it is normalized by the deterministic global value $\langle d \rangle$ so the transitions rates will not blow up as we increase $N$.

For the \erg{} model, the rate of vertex $i$ transitioning from state $s$ to $k$ is
$$Q_{ks}=Q_{ks}\left( \phi_{i}^N(t)\right).$$

We use the convention
$$Q_{ss}\left(\phi_{i}^N(t)\right):=-\sum_{k \neq s}Q_{ks}\left(\phi_{i}^N(t)\right)$$
for diagonal elements. These rate functions can also be written in the more compact matrix form
$$Q\left( \phi \right)=\left( Q_{ks}\left(\phi\right) \right)_{k,s=1}^{S}.$$

Throughout the paper, we will assume that all $Q_{sk}$'s are locally Lipschitz-continuous on $\mathbb{R}^\mathcal{S}$. On $\mathbb{R}^\mathcal{S}$, we use $\|.\|=\|.\|_1$ (but all norms on $\mathcal{R}^\mathbb{S}$ are equivalent anyway). $\|.\|$ will also use the corresponding operator norm on $\mathbb{R}^\mathcal{S}\to\mathbb{R}^\mathcal{S}$ functions.

\subsubsection*{Homogeneous mean-field approximation}

The homogeneous mean-field approximation of $x^N(t)$ is $u(t)=\{u_s(t)\}_{s\in\mathcal{S}}$, the solution of
\begin{align*}
\frac{\d}{\d t}u_s(t)=\sum_{k\in\mathcal{S}}Q_{sk}(u(t))u_k(t),
\end{align*}
written in a more compact form as
\begin{align}
\label{eq:mf}
\frac{\d}{\d t}u(t)=\underbrace{Q(u(t))u(t)}_{f(u(t))}.
\end{align}


Since $Q$ is locally Lipschitz-continuous, it is also Lipschitz continuous on the simplex
$$\Delta:=\{ v \in \mathbb{R}^S : v_{s} \geq 0\,\forall s \in \mathcal{S}, \ \sum_{s=1}^S v_{s}=1 \}$$
along with $f$. Existence and uniqueness of a global solution of \eqref{eq:mf} on $\Delta$ follows.

The Lipschitz-constant for $Q_{sk}$ is denoted by $L_{sk}$ and the Lipschitz-constant of $f$ is denoted by $L_f$ on the appropriate domain (which will be defined later); $L_Q$ denotes the Lipschitz-constant of the operator norm of $Q$ with respect to the operator norm, again on the appropriate domain.

\subsubsection*{NIMFA}

$\E_G$ and $\pr_{G}$ denote conditional expectation and probability conditioned on $G^N$, that is, we only average out the randomness of the stochastic process, not the environment.
 
The probabilities $\pr_{G}\left(\xi_{s,i}^N(t)=1 \right)=\E_{G} \left(\xi_{s,i}^N(t)\right)$ satisfy the differential equation (in vector form)
\begin{align}
\label{eq:NIMFA3}
&\frac{d}{dt}\E_{G}\left(\xi_{i}^N(t)\right)=\E_{G}\left[ Q\left( \phi_{i}^N(t) \right) \xi_{i}^N(t)  \right]
\end{align}

Unfortunately \eqref{eq:NIMFA3} is not closed, so ODE techniques can not be used directly to study the behaviour of the probabilities. To get a closed system, we introduce the N-intertwined mean field approximation (NIMFA) in a more general form. NIMFA approximates the environment $\phi_i(t)$ of each vertex $i$ by its average $\E \left(\phi_i^N(t) \right) $:
\begin{align}
\label{eq:mfappr}
\E_{G}\left( Q\left( \phi_{i}^N(t) \right) \xi_{i}^N(t)  \right) \approx Q\left( \E_{G}\left(\phi_{i}^N(t) \right)\right)\E_{G}\left(\xi_{i}^N(t)\right),
\end{align}
which results in the closed dynamic
\begin{align}
\label{eq:deterministic}
\begin{split}
& \dot{z}_{i}^N(t)=Q\left( \rho_{i}^N(t)\right)z_{i}^N(t),  \\
& \rho_{i}^N(t)=\frac{1}{\langle d \rangle}\sum_{j=1}^Na_{ij}^Nz_{j}^N(t).
\end{split}
\end{align}

Despite \eqref{eq:deterministic} being a closed system, it contains $SN$ equations, which is difficult to handle for large $N$ (yet still much better than the exact system).

Local Lipschitz continuity guarantees that local solutions of \eqref{eq:deterministic} exist and are unique. For initial conditions starting from $\Delta$, we can extend this result to global solutions. Global solutions stay in $\Delta$, giving a nice probabilistic meaning to them (see Lemma \ref{l:appendix} in the Appendix).

The NIMFA approximation for $x^N(t)$ is then
\begin{align}
\label{eq:avg}
&y^N(t):=\frac{1}{N}\sum_{i=1}^Nz_{i}^N(t).
\end{align}

In Sections \ref{s:simple} and \ref{s:general}, we present sufficient conditions that guarantee that $x^N(t), u(t)$ and $y^N(t)$ are close in the transient regime. Section \ref{s:simple} focuses on the simple dense {\erg}, presenting the main ideas of the proof in a simple setting, while Section \ref{s:general} examines the general setting.

\section{Mean-field for simple \erg}
\label{s:simple}

First we consider the case when $G^N$ is the simple \erg, that is, the random weights $a_{ij}^N$ are $1$ with probability $p$ and $0$ with probability $1-p$.  $ a_{ij}^N=a_{ji}^N$ (the graph is undirected), $a_{ii}^N=0$ (no loops), and for $i<j$, $a_{ij}^N$ are i.i.d. The regime we consider is the dense \erg, e.g. when $p$ is constant and $N\to\infty$.

Due to
\begin{align}
\label{eq:1perp}
\|\phi_i^N(t)\|= \frac{d^N(i)}{\langle d \rangle} \leq \frac{N-1}{\langle d \rangle}=\frac{1}{p},
\end{align}
$Q_{sk}$ can be restricted to the compact domain $\{\|\phi\|\leq 1\}$; its Lipschitz-constant on this domain is denoted by $L_{sk}$, and $L_Q$ is the operator norm of $Q$.

Also assume $x^N(0)\xrightarrow{p} u(0) $ and $y^N(0)\xrightarrow{p} u(0) $ respectively as $N\to\infty$ ($\xrightarrow{p}$ denotes convergence in probability).

\begin{theorem}
	\label{t:simple} In the regime described above, for every fixed $T>0$,
	\begin{align}
	\label{eq:xu0}
	&\sup_{0 \leq t \leq T}\left\| x^N(t)-u(t) \right\| \xrightarrow{p}0
	\end{align}
	and
	\begin{align}
	\label{eq:yu0}
	&\sup_{0 \leq t \leq T}\left\| y^N(t)-u(t) \right\| \xrightarrow{p}0
	\end{align}
as $N\to\infty$.
\end{theorem}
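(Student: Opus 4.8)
The plan is to prove the two convergence statements by going through the NIMFA approximation $y^N(t)$ as an intermediary, and in turn by controlling the deviation of the exact system from the deterministic mean-field $u(t)$. I would establish the triangle-inequality chain
\begin{align*}
\sup_{0\le t\le T}\|x^N(t)-u(t)\| \le \sup_{0\le t\le T}\|x^N(t)-y^N(t)\| + \sup_{0\le t\le T}\|y^N(t)-u(t)\|,
\end{align*}
so that \eqref{eq:xu0} follows once both \eqref{eq:yu0} and the first term on the right are shown to vanish in probability. The whole difficulty is concentrated in comparing the \emph{local} environments $\phi_i^N(t)$ (and their deterministic NIMFA analogues $\rho_i^N(t)$) with the \emph{global} state $u(t)$; once the environments are uniformly close to $u(t)$, all three dynamics are driven by nearly the same rate matrix $Q(u(t))$ and standard Grönwall arguments close the loop.

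First I would handle $\sup_t\|y^N(t)-u(t)\|$ deterministically (conditioned on $G^N$). The key observation is that in the dense regime the weighted neighbourhood average concentrates: since $\langle d\rangle = (N-1)p$ and each $z_j^N(t)\in\Delta$, the environment $\rho_i^N(t)=\frac{1}{\langle d\rangle}\sum_j a_{ij}^N z_j^N(t)$ should be close to $y^N(t)\approx u(t)$ because the random indicators $a_{ij}^N$ average out over $\sim Np$ neighbours. Concretely I would write $\rho_i^N(t)-y^N(t)=\frac{1}{\langle d\rangle}\sum_j(a_{ij}^N-p)z_j^N(t)$ (using $\langle d\rangle\approx Np$), bound this via a concentration inequality (Hoeffding/Azuma on the independent $a_{ij}^N$, $j\neq i$) uniformly in the finitely many coordinates, and take a union bound over the $N$ vertices; this is where the density threshold $p$ bounded below (here constant) enters, guaranteeing $\sup_{i,t}\|\rho_i^N(t)-y^N(t)\|\xrightarrow{p}0$. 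Then averaging \eqref{eq:deterministic} over $i$ and comparing with \eqref{eq:mf}, the Lipschitz bound $\|Q(\rho_i^N)z_i^N - Q(u)u\| \le L_Q\|\rho_i^N-u\| + L_f\|z_i^N - u\|$ feeds a Grönwall inequality in $\sup_i\|z_i^N(t)-u(t)\|$, yielding \eqref{eq:yu0}.

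The term $\sup_t\|x^N(t)-y^N(t)\|$ is the genuinely stochastic part and I expect it to be \emph{the main obstacle}. Here one must compare the exact jump process $x^N(t)$ against its deterministic NIMFA surrogate $y^N(t)$. The plan is to use the Dynkin/martingale decomposition for the empirical process: writing $\xi_{s,i}^N(t)$ as a stochastic integral against the driving Poisson jump measures, one splits $x^N(t)-y^N(t)$ into a drift term (controlled by local Lipschitz continuity of $Q$ together with the closeness $\|\phi_i^N-\rho_i^N\|\le \frac{1}{\langle d\rangle}\sum_j a_{ij}^N\|\xi_j^N - z_j^N\|$) and a martingale remainder. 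The martingale part is where concentration is essential: its predictable quadratic variation is $O(1/N)$ because each jump changes $x^N$ by $O(1/N)$ and there are $O(N)$ vertices, so a Doob maximal inequality gives $\E_G\sup_t\|M^N(t)\|^2 = O(1/N)\to 0$. The drift term then again closes by Grönwall against $\sup_t\|x^N(t)-y^N(t)\|$, now in expectation-conditioned-on-$G$, and transfers to convergence in probability. The subtlety to watch is that $\phi_i^N$ is driven by the \emph{random} neighbour states, so the closeness of $\phi_i^N$ to $\rho_i^N$ cannot be argued pathwise independently of the comparison we are trying to prove; the two must be coupled inside a single Grönwall loop, and keeping the environment-concentration estimate uniform over $[0,T]$ (not just pointwise in $t$) is the delicate technical point.
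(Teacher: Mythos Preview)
Your triangle-inequality route through $y^N$ is not what the paper does, and the $x^N\!-\!y^N$ leg contains a genuine gap. You control the drift difference by
\[
\|\phi_i^N(t)-\rho_i^N(t)\|\le \frac{1}{\langle d\rangle}\sum_{j}a_{ij}^N\|\xi_j^N(t)-z_j^N(t)\|,
\]
and then hope to close a Gr\"onwall loop in $\|x^N-y^N\|$. But averaging this over $i$ gives at best $\frac{1}{N}\sum_j\frac{d^N(j)}{\langle d\rangle}\|\xi_j^N(t)-z_j^N(t)\|$, which is controlled by $\frac{1}{N}\sum_j\|\xi_j^N(t)-z_j^N(t)\|$, \emph{not} by $\|x^N(t)-y^N(t)\|=\|\frac1N\sum_j(\xi_j^N(t)-z_j^N(t))\|$. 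The per-vertex quantity $\|\xi_j^N-z_j^N\|$ is the distance between an indicator vector and a probability vector and is generically $O(1)$; no law-of-large-numbers cancellation is available at the single-vertex level, so the loop does not close. A similar issue affects your $y^N\!-\!u$ argument: $z_j^N(t)$ depends on the whole graph (hence on $a_{ij}^N$), so the summands in $\frac{1}{\langle d\rangle}\sum_j(a_{ij}^N-p)z_j^N(t)$ are not independent and Hoeffding does not apply directly; moreover a Gr\"onwall in $\sup_i\|z_i^N-u\|$ would require $\sup_i\|z_i^N(0)-u(0)\|\to 0$, which is stronger than the paper's hypothesis $y^N(0)\xrightarrow{p}u(0)$.

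The paper avoids both problems by comparing $x^N$ to $u$ \emph{directly}. Via the Poisson representation it writes
\[
x^N(t)=x^N(0)+K^N(t)+\int_0^t H^N(\tau)\,\d\tau+\int_0^t f(x^N(\tau))\,\d\tau,
\]
with $K^N$ the centred Poisson fluctuation and $H^N(t)=\frac1N\sum_iQ(\phi_i^N(t))\xi_i^N(t)-Q(x^N(t))x^N(t)$. The crucial step is to bound $\|H^N(t)\|$ by quantities depending \emph{only on the graph}: after writing $H^N$ as an expectation over a uniformly random vertex $\eta$ and using $\|\xi_\eta^N\|=1$, one gets $\|H^N(t)\|\le L_Q\,\E_{G,\xi}\|\phi_\eta^N(t)-x^N(t)\|$, which is controlled by (i) $\|\E_{G,\xi}\phi_\eta^N(t)-x^N(t)\|\le R_1^N$, a degree-fluctuation term, and (ii) the variance $\sum_s\mathbb{D}^2_{G,\xi}(\phi^N_{\eta,s}(t))\le R_2^N$, which reduces to the codegree covariances $c^N(i,j)=\mathrm{cov}_G(a_{i\eta}^N,a_{j\eta}^N)$. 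Both $R_1^N$ and $R_2^N$ are $G^N$-measurable and $\to 0$; the process state $\xi$ has been eliminated from the error, so Gr\"onwall closes cleanly in $\|x^N-u\|$. The proof of \eqref{eq:yu0} is then the same argument with $\xi$ replaced by $z$ (and no $K^N$ term). In short, the paper compares local environments $\phi_i^N$ to the \emph{global} empirical state $x^N$, not to their NIMFA counterparts $\rho_i^N$, precisely because the former comparison can be bounded by graph-only quantities while your vertexwise comparison cannot.
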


For the proof of \eqref{eq:xu0}, we use Poisson representation similar to modern proofs of Kurtz (see e.g. \cite{Hayden2012}):
\begin{align}
\nonumber
x_s^N(t) = x_s^N(0) + \frac1N\sum_{k\neq s}Y_{sk}\left(
\int_{0}^t \sum_{i=1}^N Q_{sk}(\phi_i^N(\tau))\xi_{i,k}^N(\tau)\d \tau\right)\\
\label{eq:poirepr}
-\frac1N\sum_{k\neq s}Y_{ks}\left(
\int_{0}^t \sum_{i=1}^N Q_{ks}(\phi_i^N(\tau))\xi_{i,s}^N(\tau)\d \tau\right),
\end{align}
where $Y_{ks}, s,k\in\mathcal{S}$ are independent Poisson processes with rate 1. Technically, the Poisson-processes can be chosen to be the same for any $N$, which presents a coupling between systems for various values of $N$, but this coupling will not be relevant.

We introduce the error terms due to averaging out fluctuations of the Poisson-processes.
\begin{align}
\nonumber
K_{sk}^N(t)=&\frac1N Y_{sk}\left(
\int_{0}^t \sum_{i=1}^N Q_{sk}(\phi_i^N(\tau))\xi_{i,k}^N(\tau)\d \tau\right)\\
\label{eq:kdef}
&-\int_{0}^t\frac1N \sum_{i=1}^N Q_{sk}(\phi_i^N(\tau))\xi_{i,k}^N(\tau)\d \tau
\end{align}
and
\begin{align}
\label{eq:kdef}
K^N(t)&=\Big(\sum_{k:k\neq s}K_{sk}^N(t)-K_{ks}^N(t)\Big)_{s\in\mathcal{S}}
\end{align}
and also the error terms due to the difference in local and global environments:
\begin{align}
\label{eq:hdef}
H^N(t)=\frac1N \sum_{i=1}^N Q(\phi_i^N(t))\xi_{i}^N(t) - Q(x^N(t))x^N(t)
\end{align}
so
\begin{align}
\nonumber
x^N(t) = x^N(0) + K^N(t) + \int_0^t H^N(\tau)\d\tau+\int_0^t f(x^N(\tau))\d\tau.
\end{align}

To handle the error terms, we need certain properties of the underlying graph structure. To simplify notation, we introduce the random variable $\eta$ which takes values on $\{1,...,N\}$ uniformly independent from all other variables. $\eta$ mostly serves as a tool to rewrite large summations into shorter formulas. Accordingly, we also use the notation $\E_{G,\xi}$ which refers to the conditional expectation with respect to the random environment $G^N$ and the stochastic process $\xi_i^N(t)$ on it, only averaging out the artificial randomness introduced by $\eta$.

We introduce the terms
\begin{align*}
& c^N(i,j):=\cov_{G}\left( a_{i \eta}^N, a_{j \eta}^N \right)= \frac{1}{N}\sum_{k=1}^Na_{ik}^Na_{jk}^N -\frac{1}{N}\sum_{k=1}^Na_{ik}^N\frac{1}{N}\sum_{l=1}^Na_{jl}^N.
\end{align*} 
To make this formula simpler, we will use the notation
\begin{align*}
&d^N(i,j):=\sum_{k=1}^Na_{ik}^Na_{jk}^N
\end{align*}
for the number of common neighbours of vertices $i$ an $j$. Then 
\begin{align*}
&c^N(i,j)=\frac{1}{N}d^N(i,j)-\frac{1}{N^2}d^N(i)d^N(j).
\end{align*}
For $i<j$, the $c^N(i,j)$'s are identically distributed. The diagonal terms have the trivial bound
\begin{align*}
& 0 \leq c^N(i,i) \leq \frac{1}{N}\sum_{k=1}^N\left(a_{ik}^N\right)^2 = \frac{d^N(i)}{N}.
\end{align*}

\begin{lemma}
	\label{l:simplec}
	\begin{align}
	\label{eq:cm1}
	\E_G\left(\left|c^N(i,j) \right|\right) \to 0
	\end{align}
	as $N \to \infty$. 
\end{lemma}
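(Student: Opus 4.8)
The plan is to prove the stronger statement $\E\left[\left(c^N(i,j)\right)^2\right]\to 0$ for fixed $i\neq j$, from which \eqref{eq:cm1} follows by Cauchy--Schwarz, $\E\left(\left|c^N(i,j)\right|\right)\le \sqrt{\E\left[\left(c^N(i,j)\right)^2\right]}$; since $c^N(i,j)$ is $G^N$-measurable this also gives $c^N(i,j)\xrightarrow{p}0$, so any reading of the statement is covered. Note that I restrict to $i\neq j$: the diagonal terms satisfy $c^N(i,i)\to p(1-p)\neq 0$, which is exactly why they were bounded separately above. I would split the second moment as $\E\left[\left(c^N\right)^2\right]=\mathbb{D}^2\left(c^N\right)+\left(\E c^N\right)^2$ and handle the two pieces in turn.

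First I would compute the mean. Since $a_{ii}^N=a_{jj}^N=0$, the terms $k\in\{i,j\}$ drop out of $d^N(i,j)=\sum_{k\neq i,j}a_{ik}^Na_{jk}^N$, a sum of $N-2$ i.i.d.\ $\mathrm{Bernoulli}(p^2)$ variables (for $i\neq j$ the edges $\{i,k\}$ and $\{j,k\}$ are distinct, hence independent, so $\E\left(a_{ik}^Na_{jk}^N\right)=p^2$). Writing $d^N(i)=a_{ij}^N+S_i$ and $d^N(j)=a_{ij}^N+S_j$ with $S_i=\sum_{k\neq i,j}a_{ik}^N$ and $S_j=\sum_{k\neq i,j}a_{jk}^N$, the three variables $a_{ij}^N,S_i,S_j$ are mutually independent, so $\E\left(d^N(i)d^N(j)\right)$ and hence $\E\left(\tfrac{1}{N^2}d^N(i)d^N(j)\right)$ is elementary. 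A short calculation gives $\E\left(c^N(i,j)\right)=O\!\left(N^{-2}\right)\to0$, so this piece is negligible.

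Next I would bound the variance, writing $c^N=A-B$ with $A=\tfrac1N d^N(i,j)$ and $B=\tfrac{1}{N^2}d^N(i)d^N(j)$. The term $A$ is $\tfrac1N$ times a sum of $N-2$ independent bounded summands (the products for distinct $k$ share no edge), so $\mathbb{D}^2(A)=\tfrac{N-2}{N^2}\,p^2\left(1-p^2\right)=O\!\left(N^{-1}\right)$. For $B$, each degree is of order $N$ with standard deviation of order $\sqrt N$; expanding the fourth moment through the decomposition $d^N(i)=a_{ij}^N+S_i$, $d^N(j)=a_{ij}^N+S_j$ isolates the only dependence between the two degrees into the single shared edge $a_{ij}^N$ and yields $\mathbb{D}^2(B)=O\!\left(N^{-1}\right)$, the $N^{-4}$ prefactor (from squaring the $N^{-2}$ normalization) absorbing the $O\!\left(N^{3}\right)$ variance of the degree product. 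Cauchy--Schwarz then gives $\left|\cov(A,B)\right|\le\sqrt{\mathbb{D}^2(A)\,\mathbb{D}^2(B)}=O\!\left(N^{-1}\right)$, so that $\mathbb{D}^2\left(c^N\right)=O\!\left(N^{-1}\right)$. Combining with the mean estimate, $\E\left[\left(c^N\right)^2\right]=O\!\left(N^{-1}\right)\to0$, which finishes the argument.

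The hard part is the variance bound for $B=\tfrac1{N^2}d^N(i)d^N(j)$: it is the only step that requires fourth-moment bookkeeping, and one must carefully track the correlation induced by the shared edge $a_{ij}^N$. The decomposition $d^N(i)=a_{ij}^N+S_i$, $d^N(j)=a_{ij}^N+S_j$ is what renders this routine, as it confines the dependence to a single $O(1)$ edge while leaving two independent binomial bulk terms whose product fluctuation is of exactly the order that the $N^{-2}$ normalization can absorb.
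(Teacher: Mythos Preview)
Your proof is correct, but it takes a different route from the paper's. The paper argues qualitatively: by the weak law of large numbers, $\tfrac1N\sum_k a_{1k}^N a_{2k}^N \xrightarrow{p} p^2$ and $\tfrac1N\sum_k a_{ik}^N \xrightarrow{p} p$, hence $c^N(1,2)\xrightarrow{p}0$; then the deterministic bound $|c^N(1,2)|\le 1$ upgrades this to $L^1$ convergence via bounded convergence. This is shorter and avoids any moment bookkeeping, but yields no rate.

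Your direct second-moment computation is more laborious yet buys two things. First, it gives the quantitative bound $\E\!\left[(c^N(i,j))^2\right]=O(N^{-1})$, hence $\E|c^N(i,j)|=O(N^{-1/2})$. Second, it is precisely the argument the paper is forced to adopt later in the general weighted setting (Lemma~\ref{l:2} and the proof of Lemma~\ref{l:3}), where the uniform bound $|c^N|\le 1$ is no longer available and one must control $\mathbb{D}^2\!\left(N^2 c^N(1,2)\right)$ explicitly. So your approach to the simple case is essentially a warm-up for the general one, whereas the paper's short WLLN\,+\,boundedness proof here does not transfer.

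Your remark restricting to $i\neq j$ is also well placed: the paper's statement is implicitly for off-diagonal pairs (its proof only treats $c^N(1,2)$, and the diagonal terms are bounded separately both before the lemma and in the application in Lemma~\ref{l:Dfismall}).
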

\noindent Proof. Since the $c^N(i,j)$'s are identically distributed, it is sufficient to examine $c^N(1,2)$.
\begin{align*}
c^N(1,2)&= \frac{1}{N}\sum_{k=1}^Na^N_{1k}a^N_{2k} -\frac{1}{N}\sum_{k=1}^Na^N_{1k}\frac{1}{N}\sum_{l=1}^Na^N_{2l}.
\end{align*}
For each $k$, $a_{1k}^N$ and $a_{2k}^N$ are independent Bernoulli variables with parameter $p$, and the $a_{1k}^Na_{2k}^N$'s are independent Bernoulli variables with parameter $p^2$, except for $k=1,2$ when one of them is $0$. Therefore, by the law of large numbers $\frac{1}{N}\sum_{k=1}^Na^N_{1k}a^N_{2k} \xrightarrow{p} p^2$.  Similarly, $\frac{1}{N}\sum_{k=1}^Na^N_{ik} \xrightarrow{p} p  $ for $i=1,2$, hence $c^N(1,2) \xrightarrow{p} p^2-p \cdot p=0$.

The proof is concluded by the observation that $|c^N(1,2)| \leq 1$, so $c^N(1,2)$ also converges in moments. $\Box$
\begin{lemma}
	\label{l:hnt1}
	There exists a sequence of random variables $R_1^N$ not depending on $t$ such that $R^N$ is $G^N$-measurable,
	\begin{align*}
	\left\|\frac1N \sum_{i=1}^N \phi_i^N(t)-x^N(t)\right\|\leq R_1^N
	\end{align*}
	and
	\begin{align*}
	R_1^N \xrightarrow{p}0
	\end{align*}
	as $N\to\infty$.
\end{lemma}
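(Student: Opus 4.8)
The plan is to reduce the vector difference to a scalar functional of the degree sequence and then apply a routine second-moment estimate for the degrees of the \erg. First I would insert the definition of $\phi_i^N(t)$ and exchange the order of summation, using the symmetry $a_{ij}^N=a_{ji}^N$ to collapse the inner sum into a degree:
$$\frac1N\sum_{i=1}^N\phi_i^N(t)=\frac{1}{N\langle d \rangle}\sum_{j=1}^N\Big(\sum_{i=1}^Na_{ij}^N\Big)\xi_j^N(t)=\frac{1}{N\langle d \rangle}\sum_{j=1}^Nd^N(j)\,\xi_j^N(t).$$
Subtracting $x^N(t)=\frac1N\sum_{j=1}^N\xi_j^N(t)$ then exhibits the difference as a degree-reweighting of the empirical state,
$$\frac1N\sum_{i=1}^N\phi_i^N(t)-x^N(t)=\frac1N\sum_{j=1}^N\Big(\frac{d^N(j)}{\langle d \rangle}-1\Big)\xi_j^N(t).$$

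Next I would pass to the norm. Each $\xi_j^N(t)$ is the indicator vector of a single local state, so $\|\xi_j^N(t)\|=1$ for every $j$ and \emph{every} $t$; the triangle inequality therefore gives
$$\Big\|\frac1N\sum_{i=1}^N\phi_i^N(t)-x^N(t)\Big\|\le\frac1N\sum_{j=1}^N\Big|\frac{d^N(j)}{\langle d \rangle}-1\Big|=:R_1^N.$$
The right-hand side depends only on the degree sequence, hence is $G^N$-measurable, and it contains no $t$; this is exactly the point where the uniform-in-$t$ bound $\|\xi_j^N(t)\|\le1$ is used to discard the time dependence, so $R_1^N$ is the random variable required by the statement.

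It then remains to show $R_1^N\xrightarrow{p}0$. Since $R_1^N\ge0$, by Markov's inequality it suffices to prove $\E(R_1^N)\to0$. The degrees $d^N(j)$ are identically distributed, each a sum of $N-1$ i.i.d.\ Bernoulli($p$) variables with mean $\langle d \rangle=(N-1)p$ and variance $(N-1)p(1-p)$, so centering and applying Cauchy--Schwarz (equivalently Jensen) yields
$$\E(R_1^N)=\E\Big|\frac{d^N(1)}{\langle d \rangle}-1\Big|\le\frac{\sqrt{\mathrm{Var}(d^N(1))}}{\langle d \rangle}=\sqrt{\frac{1-p}{(N-1)p}}\to0.$$

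I do not expect a serious obstacle here: the whole argument is essentially a weak law of large numbers for the degree sequence of a dense \erg. The only genuinely delicate point is the order of operations in the second step — the dominating variable must be extracted \emph{before} taking norms, which is why I bound each $\|\xi_j^N(t)\|$ by $1$ rather than trying to control the sum pointwise in $t$. I would also verify the normalization $\langle d \rangle=(N-1)p$ (that is, $\mu=1$ in the simple unweighted case) so that the centering $d^N(j)/\langle d \rangle-1$ is genuinely mean-zero and the variance bound applies as written.
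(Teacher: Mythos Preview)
Your proof is correct and follows essentially the same route as the paper: expand $\phi_i^N$ via symmetry to get a degree-reweighting of $\xi_j^N$, use $\|\xi_j^N(t)\|=1$ to pass to a $t$-independent, $G^N$-measurable bound, and conclude via the second moment of the degree. The only cosmetic difference is that the paper pushes one more Cauchy--Schwarz into the definition of $R_1^N$, taking $R_1^N:=\sqrt{\frac{1}{N\langle d\rangle^2}\sum_j(d^N(j)-\langle d\rangle)^2}$ rather than your $\ell^1$ average, and then bounds $\E[(R_1^N)^2]$ directly; your choice is in fact the tighter one.
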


Lemma \ref{l:hnt1} states that the averages of local environments are close to the global environment for large $N$.

\noindent Proof.

\begin{align*}
&  \frac{1}{N}\sum_{i=1}^{N}\phi_{i}^N(t)=\frac{1}{N\langle d \rangle}\sum_{i=1}^{N}\sum_{j=1}^N a_{ij}^N \xi_{j}^N(t)= \\
&\frac{1}{N\langle d \rangle}\sum_{j=1}^Nd^N(j)\xi_{j}^N(t)=x^N(t)+\frac{1}{N\langle d \rangle}\sum_{j=1}^N \left[d^N(j)-\langle d \rangle \right]\xi_{j}^N(t),
\end{align*}
 and so
\begin{align}
\label{l2:useful:1}
\begin{split}
& \left \| \frac{1}{N}\sum_{i=1}^{N}\phi_{i}^N(t)-x^N(t) \right \| \leq \frac{1}{N\langle d \rangle}\sum_{j=1}^N \left|d^N(j)-\langle d \rangle \right| \ \cdot \left \|\xi_{j}^N(t) \right \| = \\
&\frac{1}{N\langle d \rangle}\sum_{j=1}^N \left|d^N(j)-\langle d \rangle \right| \leq \sqrt{\frac{1}{N \langle d \rangle^2}\sum_{j=1}^N\left(d^N(j)-\langle d \rangle \right)^2}=:R_1^N.
\end{split}
\end{align}
$R_1^N$ is clearly $G^N$-measurable, and
\begin{align}
\label{l2:useful:2}
\left(\E(R_1^N)\right)^2\leq \E\left((R_1^N)^2\right)=&
\frac{1}{N \langle d \rangle^2} \sum_{j=1}^N \mathbb{D}^2 \left( d^N(j)\right)=\frac{\mathbb{D}^2\left( d^N(1)\right)}{\langle d \rangle^2}=\\
\nonumber
&\frac{(N-1)p(1-p)}{((N-1)p)^2}=O\left( \frac{1}{N} \right) \to 0
\end{align}
as $N\to\infty$. So $\E(R_1^N)\to 0$ as $N\to\infty$, and Markov's inequality finishes the proof of Lemma \ref{l:hnt1}. $\Box$

Lemma \ref{l:Dfismall} states that local environments do not vary much when $N$ is large. Combining this with lemma \ref{l:hnt1}, we obtain that for large $N$ nodes typically observe a similar environment which is close to the global state.

\begin{lemma}
\label{l:Dfismall}
	There exists a sequence of random variables $R^N_2$ not depending on $t$ such that $R^N_2$ is $G^N$-measurable,
	\begin{align*}
	& \sum_{s=1}^S \mathbb{D}^2_{G, \xi}\left( \phi_{\eta,s}^N(t) \right) \leq R_2^N
	\end{align*}
	and
	\begin{align*}
	R^N_2 \xrightarrow{p}0
	\end{align*}
	as $N\to\infty$.

\end{lemma}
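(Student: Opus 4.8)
The plan is to exploit that, once we condition on the environment $G^N$ and the process $\xi$, the only remaining randomness in $\phi_{\eta,s}^N(t)$ is the uniform index $\eta$. Hence $\mathbb{D}^2_{G,\xi}$ is simply the empirical variance of the numbers $\{\phi_{i,s}^N(t)\}_{i=1}^N$ across the vertices:
\begin{align*}
\mathbb{D}^2_{G,\xi}\left(\phi_{\eta,s}^N(t)\right)=\frac1N\sum_{i=1}^N\left(\phi_{i,s}^N(t)\right)^2-\left(\frac1N\sum_{i=1}^N\phi_{i,s}^N(t)\right)^2.
\end{align*}
First I would expand $\phi_{i,s}^N(t)=\frac{1}{\langle d \rangle}\sum_k a_{ik}^N\xi_{k,s}^N(t)$ and collect the double sums over vertex indices $k,l$, so that the vertex-average $\frac1N\sum_i a_{ik}^Na_{il}^N$ appears. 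By the very definition of $c^N(k,l)$ this average equals $c^N(k,l)+\frac{1}{N^2}d^N(k)d^N(l)$.

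The key step is that the $\frac{1}{N^2}d^N(k)d^N(l)$ contribution to the second moment is cancelled exactly by the squared first moment, since $\frac1N\sum_i\phi_{i,s}^N(t)=\frac{1}{N\langle d\rangle}\sum_k d^N(k)\xi_{k,s}^N(t)$. What survives is
\begin{align*}
\mathbb{D}^2_{G,\xi}\left(\phi_{\eta,s}^N(t)\right)=\frac{1}{\langle d\rangle^2}\sum_{k,l=1}^N c^N(k,l)\,\xi_{k,s}^N(t)\xi_{l,s}^N(t).
\end{align*}
Summing over $s$ and noting that $\sum_{s\in\mathcal{S}}\xi_{k,s}^N(t)\xi_{l,s}^N(t)\in[0,1]$ (it is the indicator that $k$ and $l$ are in the same state at time $t$), I would bound
\begin{align*}
\sum_{s=1}^S\mathbb{D}^2_{G,\xi}\left(\phi_{\eta,s}^N(t)\right)\leq\frac{1}{\langle d\rangle^2}\sum_{k,l=1}^N\left|c^N(k,l)\right|=:R_2^N.
\end{align*}
This $R_2^N$ is manifestly $G^N$-measurable and independent of $t$, fulfilling the first two requirements.

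For the convergence $R_2^N\xrightarrow{p}0$, I would estimate $\E(R_2^N)$ and invoke Markov's inequality, splitting the double sum into diagonal and off-diagonal parts. For the diagonal, the bound $0\leq c^N(i,i)\leq \frac{d^N(i)}{N}$ gives $\sum_i\E\left(c^N(i,i)\right)\leq \langle d\rangle$, contributing at most $\frac{1}{\langle d\rangle}\to0$ to $\E(R_2^N)$. For the off-diagonal part, the $N(N-1)$ terms are identically distributed, so their contribution is $\frac{N(N-1)}{\langle d\rangle^2}\E\left(\left|c^N(1,2)\right|\right)$; since $\frac{N(N-1)}{\langle d\rangle^2}=\frac{N}{(N-1)p^2}\to\frac{1}{p^2}$ while $\E\left(\left|c^N(1,2)\right|\right)\to0$ by Lemma \ref{l:simplec}, this also vanishes. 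Hence $\E(R_2^N)\to0$ and Markov's inequality finishes the proof.

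The main obstacle I anticipate is getting the exact cancellation identity right and then verifying that the scaling balances: there are order $N^2$ off-diagonal covariance terms, but we divide by $\langle d\rangle^2$ of order $N^2p^2$, so the prefactor stays bounded and the decay of $\E\left(\left|c^N(1,2)\right|\right)$ from Lemma \ref{l:simplec} is exactly what is needed. In a sparser regime this balance would be the delicate point, but for the constant-$p$ dense graph it closes cleanly.
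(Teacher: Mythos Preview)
Your proof is correct and follows essentially the same route as the paper: you arrive at the identical bound $R_2^N=\frac{1}{\langle d\rangle^2}\sum_{k,l}|c^N(k,l)|$ (the paper obtains it directly via bilinearity of the covariance rather than writing out second moment minus squared mean, but the computation is the same), and your treatment of the diagonal and off-diagonal contributions to $\E(R_2^N)$ matches the paper's argument line for line.
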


\noindent Proof.
\begin{align}
\label{l3:useful:1}
\begin{split}
& \sum_{s=1}^S \mathbb{D}^2_{G, \xi}\left( \phi_{\eta,s}^N(t) \right)= \sum_{s=1}^S \mathbb{D}^2_{G, \xi}\left( \frac{1}{\langle d \rangle}\sum_{i=1}^N a_{\eta i}^N \xi_{i,s}^N(t) \right)= \\
& \frac{1}{\langle d \rangle^2}\sum_{i=1}^N \sum_{j=1}^N \underbrace{\sum_{s=1}^N \xi_{i,s}^N(t) \xi_{j,s}^N(t)}_{0\leq .\leq 1} \cov_G\left(a_{\eta i}^N, a_{\eta j}^N \right)\leq\frac{1}{\langle d \rangle^2}\sum_{i=1}^N \sum_{j=1}^N |c^N(i,j)|=:R_2^N.
\end{split}
\end{align}

$R_2^N$ is clearly $G^N$-measurable. To prove $R_2^N\to 0$ in probability, we handle the diagonal and the non-diagonal terms separately.

Since $0 \leq c^N(i,i) \leq \frac{1}{N}d^N(i) $, we have
\begin{align*}
& \E \left [\frac{1}{\langle d \rangle^2}\sum_{i=1}^N \left |c^N(i,i) \right | \right]=\frac{N}{\langle d \rangle^2} \E \left |c^N(1,1) \right | \leq  \frac{1}{\langle d \rangle^2} \E \left (d^N(1)\right)= \\
&\frac{1}{\langle d \rangle}=\frac{1}{(N-1)p} \to 0.
\end{align*}

For the non-diagonal terms, we use Lemma \ref{l:simplec}:
\begin{align*}
&\E \left[\frac{2}{\langle d \rangle^2}\sum_{i<j}  \left |c^N(i,j) \right | \right]=\frac{N(N-1)}{\langle d \rangle^2} \E \left( \left|c^N(1,2) \right | \right)= \\
&\frac{N}{(N-1)p^2}\E \left( \left|c^N(1,2) \right | \right) \to 0.
\end{align*}
so $ R_2^N \xrightarrow{p}0$ holds. $\Box$

Now we turn our attention specifically to the error terms $K^N(t)$ and $H^N(t)$. 

\begin{lemma}
	\label{l:knt}
	For any $T>0$ fixed,
	\begin{align}
	\sup_{0\leq t\leq T}\|K^N(t)\| \xrightarrow{p}0
	\end{align}
	as $N\to\infty$.
\end{lemma}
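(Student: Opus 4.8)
The plan is to exploit the fact that, for each ordered pair $(s,k)$ with $s\neq k$, the quantity
$$M_{sk}^N(t):=N\,K_{sk}^N(t)=Y_{sk}\!\left(A_{sk}^N(t)\right)-A_{sk}^N(t),\qquad A_{sk}^N(t):=\int_0^t\sum_{i=1}^N Q_{sk}(\phi_i^N(\tau))\xi_{i,k}^N(\tau)\,\d\tau,$$
is a mean-zero martingale obtained by time-changing the rate-$1$ Poisson process $Y_{sk}$ by its own (random, adapted) compensator $A_{sk}^N$. Since $Y_{sk}$ has unit jumps and $A_{sk}^N$ is exactly the compensator, the predictable quadratic variation of $M_{sk}^N$ is $A_{sk}^N$, so $\E\big[(M_{sk}^N(t))^2\big]=\E\big[A_{sk}^N(t)\big]$.

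First I would bound the compensator deterministically. By \eqref{eq:1perp} every $\phi_i^N(\tau)$ lies in the compact set $\{\|\phi\|\le 1/p\}$, on which each off-diagonal rate $Q_{sk}$ is bounded and nonnegative; writing $\bar Q:=\max_{s\neq k}\sup_{\|\phi\|\le 1/p}Q_{sk}(\phi)$ and using $\xi_{i,k}^N\in\{0,1\}$ together with the $N$ summands gives $0\le A_{sk}^N(T)\le N\bar Q T$ for every realization. In particular $A_{sk}^N$ is an increasing, integrable, $\xi$-adapted process, as required for the time-change interpretation.

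Next I would apply Doob's $L^2$ maximal inequality to the square-integrable martingale $M_{sk}^N$ on $[0,T]$:
$$\E\Big[\sup_{0\le t\le T}(M_{sk}^N(t))^2\Big]\le 4\,\E\big[(M_{sk}^N(T))^2\big]=4\,\E\big[A_{sk}^N(T)\big]\le 4N\bar Q T.$$
Dividing by $N^2$ yields $\E\big[\sup_{0\le t\le T}(K_{sk}^N(t))^2\big]\le 4\bar Q T/N\to 0$, so each $\sup_{0\le t\le T}|K_{sk}^N(t)|$ tends to $0$ in $L^2$ and hence in probability. Since $\mathcal S$ is finite, $K^N(t)$ is a fixed finite linear combination of the $K_{sk}^N(t)$, and the triangle inequality gives
$$\sup_{0\le t\le T}\|K^N(t)\|\le\sum_{s}\sum_{k\neq s}\Big(\sup_{0\le t\le T}|K_{sk}^N(t)|+\sup_{0\le t\le T}|K_{ks}^N(t)|\Big)\xrightarrow{p}0,$$
which is the claim.

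The step I expect to require the most care is the justification of the martingale property and the identification of the predictable quadratic variation for the time-changed process $Y_{sk}(A_{sk}^N(t))$, because the time change $A_{sk}^N$ is itself random and $\xi$-adapted rather than deterministic. This is supplied by the standard theory of counting processes with stochastic intensity: as the integrand $\sum_i Q_{sk}(\phi_i^N)\xi_{i,k}^N$ is a bounded predictable process, $M_{sk}^N$ is a square-integrable càdlàg martingale with predictable quadratic variation $A_{sk}^N$, which is precisely what both the variance identity and Doob's inequality consume. Everything else reduces to the boundedness of $Q$ on the compact range of $\phi_i^N$ and the finiteness of $\mathcal S$.
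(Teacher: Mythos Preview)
Your proof is correct. The route, however, differs from the paper's. The paper argues purely pathwise: since $\|\phi_i^N(t)\|\le 1/p$, the random time change satisfies $\frac{1}{N}A_{sk}^N(t)\le L_{sk}t/p$ deterministically, whence
\[
\sup_{0\le t\le T}|K_{sk}^N(t)|\le \sup_{0\le x\le L_{sk}T/p}\left|\tfrac{1}{N}Y_{sk}(Nx)-x\right|,
\]
and the functional strong law of large numbers for the Poisson process finishes the job. Your argument instead identifies $N K_{sk}^N$ as a square-integrable martingale with predictable quadratic variation $A_{sk}^N$, bounds the latter by $N\bar Q T$, and applies Doob's $L^2$ maximal inequality. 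The paper's approach is more elementary in that it avoids any stochastic-calculus setup and uses only the SLLN; your approach, at the cost of invoking the intensity/compensator theory you flag at the end, buys a quantitative $L^2$ bound $\E\big[\sup_{t\le T}(K_{sk}^N(t))^2\big]=O(1/N)$ rather than bare convergence in probability. Both arguments hinge on exactly the same deterministic bound for the integrated rate coming from \eqref{eq:1perp}.
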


\noindent Proof (Lemma \ref{l:knt}). It is sufficient to show $\sup_{0\leq t\leq T}|K_{sk}^N(t)| \xrightarrow{p}0$ for any $s\neq k\in \mathcal{S}$.
Then using \eqref{eq:1perp},
\begin{align*}
Q_{sk}(\phi_i^N(t))\xi_{i,k}^N(t)\leq L_{sk}\|\phi_i^N(t)\|&\leq \frac{L_{sk}}{p},\\
\int_{0}^t\frac{1}{N}\sum_{i=1}^N Q_{sk}(\phi_i^N(\tau))\xi_{i,k}^N(\tau)\d \tau&\leq  \frac{L_{sk} t}{p}
\end{align*}
and
\begin{align}
\nonumber
\sup_{0\leq t\leq T} |K_{sk}^N(t)|\leq &\sup_{0\leq x\leq \frac{L_{sk}T}{p}}\left| \frac1N Y_{sk}(Nx)-x\right|,
\end{align}
which goes to 0 in probability according to the functional strong law of large numbers for the Poisson process (\cite{wardsuppl}, Section 3.2). $\Box$

\begin{lemma}
	\label{l:hnt2}
	\begin{align*}
	\sup_{0\leq t\leq T} \|H^N(t)\| \xrightarrow{p}0
	\end{align*}
	as $N\to\infty$ for all $T>0$.
\end{lemma}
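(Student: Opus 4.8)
The plan is to insert the average local environment $\bar\phi^N(t):=\frac1N\sum_{i=1}^N\phi_i^N(t)$ as an intermediate point and split $H^N(t)$ accordingly. Using $\frac1N\sum_{i=1}^N\xi_i^N(t)=x^N(t)$ from \eqref{eq:xdef}, this gives
\begin{align*}
H^N(t) = \underbrace{\frac1N\sum_{i=1}^N\bigl[Q(\phi_i^N(t))-Q(\bar\phi^N(t))\bigr]\xi_i^N(t)}_{A^N(t)} + \underbrace{\bigl[Q(\bar\phi^N(t))-Q(x^N(t))\bigr]x^N(t)}_{B^N(t)}.
\end{align*}
The term $B^N(t)$ measures how far the \emph{average} environment is from the global state, while $A^N(t)$ measures how much the \emph{individual} environments fluctuate around their average; the former is controlled by Lemma \ref{l:hnt1} and the latter by Lemma \ref{l:Dfismall}. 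The essential feature is that the resulting bounds $R_1^N$ and $R_2^N$ do not depend on $t$, so the supremum over $[0,T]$ comes for free and no Gr\"onwall-type argument is needed here.

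For $B^N(t)$ I would use that $x^N(t)\in\Delta$, so $\|x^N(t)\|=1$, together with the Lipschitz continuity of the operator norm of $Q$: this yields $\|B^N(t)\|\leq L_Q\|\bar\phi^N(t)-x^N(t)\|\leq L_Q R_1^N$ by Lemma \ref{l:hnt1}, uniformly in $t$, and $R_1^N\xrightarrow{p}0$.

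For $A^N(t)$, since each $\xi_i^N(t)$ is an indicator vector with $\|\xi_i^N(t)\|=1$, Lipschitz continuity gives $\|A^N(t)\|\leq L_Q\frac1N\sum_{i=1}^N\|\phi_i^N(t)-\bar\phi^N(t)\|$. The sum on the right is exactly the $\eta$-mean absolute deviation $\sum_{s=1}^S\E_{G,\xi}\bigl|\phi_{\eta,s}^N(t)-\E_{G,\xi}\phi_{\eta,s}^N(t)\bigr|$, which I would pass to the standard deviation coordinatewise via Jensen and then combine across the $S$ coordinates by Cauchy--Schwarz, obtaining the bound $\sqrt{S}\,(\sum_{s=1}^S\mathbb{D}^2_{G,\xi}(\phi_{\eta,s}^N(t)))^{1/2}\leq\sqrt{S}\sqrt{R_2^N}$ from Lemma \ref{l:Dfismall}. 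Hence $\sup_{0\leq t\leq T}\|A^N(t)\|\leq L_Q\sqrt{S}\sqrt{R_2^N}\xrightarrow{p}0$.

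Combining the two estimates gives $\sup_{0\leq t\leq T}\|H^N(t)\|\leq L_Q(R_1^N+\sqrt{S}\sqrt{R_2^N})\xrightarrow{p}0$. The only nonroutine step, and thus the main obstacle, is the treatment of the fluctuation term $A^N(t)$: one must recognize the averaged Lipschitz bound $\frac1N\sum_i\|\phi_i^N(t)-\bar\phi^N(t)\|$ as a mean absolute deviation and control it by the variance quantity of Lemma \ref{l:Dfismall}. The passage from the $\ell^1$ deviation to the coordinatewise variances (where the factor $\sqrt{S}$ enters) is the place where some care is required; everything else is a direct application of the preceding lemmas and the local Lipschitz bound on the compact domain $\{\|\phi\|\leq 1\}$ guaranteed by \eqref{eq:1perp}.
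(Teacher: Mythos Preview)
Your proof is correct and follows essentially the same route as the paper's: both arguments compare each local environment $\phi_i^N(t)$ to the global state $x^N(t)$ via the intermediate point $\bar\phi^N(t)=\E_{G,\xi}\phi_\eta^N(t)$, invoke Lemma~\ref{l:hnt1} for the bias term and Lemma~\ref{l:Dfismall} (with the Jensen/Cauchy--Schwarz passage from mean absolute deviation to variance) for the fluctuation term, and arrive at the identical $t$-uniform bound $L_Q\bigl(R_1^N+\sqrt{S\,R_2^N}\bigr)$. The only cosmetic difference is that you insert $\bar\phi^N(t)$ at the level of $Q$ and split $H^N(t)=A^N(t)+B^N(t)$ explicitly, whereas the paper first applies the Lipschitz estimate $\|H^N(t)\|\leq L_Q\,\E_{G,\xi}\|\phi_\eta^N(t)-x^N(t)\|$ and only then inserts the intermediate point via the triangle inequality on the distances; the substance is the same.
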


\noindent Proof.

\begin{align}
\label{l5:useful:1}
\begin{split}
 \left \| H^N(t) \right\| &= \left \| \frac{1}{N}\sum_{i=1}^NQ \left(\phi_i^N(t) \right)\xi_{i}^N(t)-f \left( x^N(t) \right) \right\| \\
&= \left \|\E_{G, \xi} \left[Q\left(\phi_{\eta}^N(t)\right) \xi_{\eta}^N(t)\right]-Q\left(x^N(t) \right)x^N(t) \right \| \\
&= \left \| \E_{G, \xi} \left[ \left(Q \left( \phi_{\eta}^N(t)\right)-Q \left( x^N(t) \right) \right)\xi_{\eta}^N(t) \right] \right \| \\
& \leq  \E_{G, \xi} \left \| \left(Q \left( \phi_{\eta}^N(t)\right)-Q \left( x^N(t) \right) \right)\xi_{\eta}^N(t)  \right \| \\
& \leq \E_{G, \xi} \left ( \left \| Q \left( \phi_{\eta}^N(t)\right)-Q \left( x^N(t) \right)  \right \| \cdot \left \|\xi_{\eta}^N(t)  \right \| \right )\\
& =\E_{G, \xi} \left \| Q \left( \phi_{\eta}^N(t)\right)-Q \left( x^N(t) \right)  \right \|\\
&\leq L_Q \E_{G, \xi} \left \|\phi_{\eta}^N(t)-x^N(t) \right \| \\
&\leq L_Q \left[ \left \| \E_{G,\xi}\left(\phi_{\eta}^N(t) \right)-x^N(t) \right \|+\E_{G,\xi}\left \| \phi_{\eta}^N(t) -\E_{G,\xi}\left(\phi_{\eta}^N(t) \right)\right \| \right] \\
& \leq L_Q \left( R_1^N+\sum_{s=1}^S \E_{G,\xi}\left | \phi_{\eta,s}^N(t) -\E_{G,\xi}\left(\phi_{\eta,s}^N(t) \right) \right | \right)\\
& \leq L_Q \left( R_1^N+ \sqrt{S \sum_{s=1}^S \mathbb{D}^2_{G,\xi} \left( \phi_{\eta, s}^N(t)\right)}  \right) \leq L_Q\left(R_1^N+\sqrt{SR_2^N} \right) \xrightarrow{p} 0
\end{split}
\end{align}
as $N\to\infty$.  $\Box$

To finish the proof of Theorem \ref{t:simple}, we have
\begin{align}
\nonumber
x^N(t)-u(t) = & x^N(0)-u(0) + K^N(t) + \int_0^t H^N(\tau)\d\tau\\
&+ \int_0^t f(x^N(\tau)) - f(u^N(\tau))\d\tau,
\end{align}
and Gr\"onwall gives
\begin{align}
\nonumber
&\sup_{0\leq t\leq T} \|x^N(t)-u(t)\|\leq\\
\nonumber
&\Big(\|x^N(0)-u(0)\| + \underbrace{\sup_{0\leq t\leq T} \|K^N(t)\| + T\sup_{0\leq t\leq T} \|H^N(t)\|}_{\xrightarrow{p}0}\Big)e^{L_f T},
\end{align}
proving \eqref{eq:xu0}.

The proof of \eqref{eq:yu0} is essentially identical to the proof of \eqref{eq:xu0} as all bounds come inherently from the graph structure. To prove \eqref{eq:yu0}, one has to use that the solutions can be interpreted as probability vectors, then use the same trivial upper bounds as for the indicators. Note that in the case of NIMFA the error term $K^N(t)$ is absent. Details are left to the reader. $\Box$

\section{Mean-field for general \erg}
\label{s:general}

In this section, we explore how far the ideas of the proof in Section \ref{s:simple} can be taken to extend the result to general \erg{}s. We use the notation of Section \ref{s:setup}.

Our baseline assumptions in this section are the following:
\begin{itemize}
	\item convergence of the initial conditions, that is, $x^N(0),y^N(0) \xrightarrow{p}u(0)$ as $N\to\infty$, and
	\item the rate functions $Q_{s,k}$ are locally Lipschitz-continuous for all $s,k \in \mathcal{S}$.	
\end{itemize}

We list some additional assumptions we will need.

\begin{itemize}
	\item[] A1: For all $s,k \in \mathcal{S}$, $Q_{s,k}$ is globally Lipschitz continuous with Lipschitz-constant $L_{sk}$.
	\item[] A2: There are positive constants $c, \epsilon$ such that $\langle d \rangle \geq c N^{\frac{1}{2}+\epsilon}$.
\end{itemize}

In many applications in epidemiology, the rate functions are linear and A1 is not restrictive. However, if we only have locally Lipschitz continuous rate functions, we need additional regularity conditions for the weights.

\begin{itemize}
	\item[] B1: $p^N \equiv 1$
	\item[] B2: the logarithmic generator function $\Lambda(s)=\log(\E(\exp(s w_{12}^N)))$ is finite for some $s>0$. 
\end{itemize}

A fixed constant factor in the weights could be incorporated into either B1 or B2 (in the current formulation, it is incorporated into B2).

We note that B1 implies A2.



\begin{theorem}
\label{t:1} Assume the baseline assumptions plus either (A1 and A2) or (B1 and B2). Then for every $T>0$,
\begin{align}
\label{eq:xu}
&\sup_{0 \leq t \leq T}\left\| x^N(t)-u(t) \right\| \xrightarrow{p}0
\end{align}
and
\begin{align}
\label{eq:yu}
&\sup_{0 \leq t \leq T}\left\| y^N(t)-u(t) \right\| \xrightarrow{p}0.
\end{align}
\end{theorem}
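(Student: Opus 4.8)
The plan is to reuse the architecture of the proof of Theorem \ref{t:simple} essentially verbatim at the level of the decomposition: write $x^N(t)$ in the Poisson representation \eqref{eq:poirepr}, split the error into the martingale-type term $K^N(t)$ and the environment term $H^N(t)$, and close with Gr\"onwall. The only components that genuinely have to be redone are (i) the graph-structural moment estimates underlying Lemmas \ref{l:simplec}, \ref{l:hnt1} and \ref{l:Dfismall}, now for general weights and a scaling connectivity $p^N$, and (ii) the control of the local-environment norms $\|\phi_i^N(t)\|=d^N(i)/\langle d \rangle$, which are no longer deterministically bounded by $1/p$ as in \eqref{eq:1perp}. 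Point (ii) is exactly where the two alternative hypothesis sets diverge.

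For (i) I would first reprove the analogue of Lemma \ref{l:hnt1}: since $\E((R_1^N)^2)=\mathbb{D}^2(d^N(1))/\langle d \rangle^2=(N-1)\tilde\sigma^2/((N-1)\tilde\mu)^2$ with $\tilde\sigma^2=O(p^N)$ and $\langle d \rangle=(N-1)p^N\mu$, this is $O(1/\langle d \rangle)\to 0$ under A2 (and a fortiori under B1). For the analogue of Lemma \ref{l:Dfismall}, the diagonal part of $R_2^N$ is again $O(1/\langle d \rangle)$ via $c^N(i,i)\le d^N(i)/N$ together with $\E((a_{12}^N)^2)=(\sigma^2+\mu^2)p^N$. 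The off-diagonal part is the crux: computing the mean and variance of $c^N(1,2)$ from the finiteness of $\mu$ and $\sigma^2$ gives $\E(c^N(1,2))=O((p^N)^2/N)$ and $\mathbb{D}^2(c^N(1,2))=O((p^N)^2/N)$, so that $\E|c^N(1,2)|=O(p^N/\sqrt N)$ is dominated by the standard deviation. Hence the off-diagonal contribution to $\E(R_2^N)$ is of order $N^2\E|c^N(1,2)|/\langle d \rangle^2=O(\sqrt N/\langle d \rangle)$, which tends to $0$ precisely when $\langle d \rangle\gg\sqrt N$ --- exactly assumption A2, which is what pins the threshold at $N^{1/2+\epsilon}$.

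The control of $\|\phi_i^N(t)\|$ in point (ii) then splits by regime. Under A1 and A2, global Lipschitz continuity removes any need to restrict $Q$ to a compact domain, so the only place the old bound $\|\phi_i^N\|\le 1/p$ was used is the Poisson step of Lemma \ref{l:knt}; there I would bound the integrand's vertex-average by $\tfrac1N\sum_i\|\phi_i^N(\tau)\|=\tfrac1{N\langle d \rangle}\sum_{i,j}a_{ij}^N$, a quantity independent of $\tau$ that concentrates at $1$ by Chebyshev, so on the high-probability event that it is at most $2$ the argument of each $Y_{sk}$ lies in a deterministic window $[0,CNT]$ and the functional law of large numbers applies as before. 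Under B1 and B2 the graph is complete, so A2 holds automatically, but $Q$ is only locally Lipschitz; here B2 supplies a Cram\'er/Chernoff bound $\pr(d^N(i)>C\langle d \rangle)\le e^{-cN}$ for some $C>1$, and a union bound over the $N$ vertices gives $\max_i\|\phi_i^N(t)\|\le C$ with probability tending to $1$. On that event every local environment lies in the compact set $\{\|\phi\|\le C\}$, on which the locally Lipschitz $Q_{sk}$ have finite constants $L_{sk}$ and $L_Q$, and the simple-case estimates go through unchanged.

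With these two ingredients, Lemma \ref{l:knt} ($\sup_t\|K^N(t)\|\xrightarrow{p}0$) and Lemma \ref{l:hnt2} ($\sup_t\|H^N(t)\|\xrightarrow{p}0$ via $\|H^N(t)\|\le L_Q(R_1^N+\sqrt{SR_2^N})$) carry over, and Gr\"onwall together with the convergence of the initial conditions yields \eqref{eq:xu}; the NIMFA bound \eqref{eq:yu} follows by the identical argument with $K^N$ absent, interpreting the $z_i^N$ as probability vectors as in Section \ref{s:simple}. I expect the main obstacle to be the off-diagonal covariance estimate under general weights --- obtaining $\E|c^N(1,2)|=O(p^N/\sqrt N)$ sharply enough to expose the $\langle d \rangle\gg\sqrt N$ threshold --- and, in the locally Lipschitz regime, the uniform-in-vertex degree concentration, which is the sole reason the exponential-moment hypothesis B2 is imposed.
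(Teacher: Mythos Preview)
Your proposal is correct and follows essentially the same route as the paper. The paper does exactly what you outline: Lemma \ref{l:1} reproves the $R_1^N$ bound via $\mathbb{D}^2(d^N(1))/\langle d\rangle^2=O(1/\langle d\rangle)$; Lemma \ref{l:2} is the variance computation $\mathbb{D}^2(N^2c^N(1,2))=O(N\langle d\rangle^2)$ (your $\mathbb{D}^2(c^N(1,2))=O((p^N)^2/N)$ after rescaling), carried out by classifying which covariances $\gamma^N(i,j,k,l)=\cov(a_{1i}^Na_{2j}^N,a_{1k}^Na_{2l}^N)$ are nonzero; Lemma \ref{l:3} combines these into $\E(R_2^N)=O(\sqrt N/\langle d\rangle)$, exposing the A2 threshold; Lemma \ref{l:4} is your Chernoff/union-bound argument under (B1,B2); and Lemmas \ref{l:5}--\ref{l:6} handle $H^N$ and $K^N$ by case-splitting on the hypothesis set precisely as you describe, with the Poisson argument under (A1,A2) controlled via the event $\{\tfrac{1}{N\langle d\rangle}\sum_i d^N(i)\le W\}$ (the paper uses Markov and sends $W\to\infty$, you use Chebyshev with $W=2$; either works).
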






Following the notation of Section \ref{s:simple}, we again use $\eta$, a random variable that takes values on $\{1,...,N\}$ uniformly, independently from all other variables, along with the notation $\E_{G,\xi}$ which refers to the conditional expectation with respect to the random environment $G^N$ and the stochastic process $\xi_i^N(t)$.

We also use the same notation $K^N(t)$ and $H^N(t)$ as introduced in \eqref{eq:kdef} and \eqref{eq:hdef}.

\begin{align}
\nonumber
K_{sk}^N(t)=&\frac1N Y_{sk}\left(
\int_{0}^t \sum_{i=1}^N Q_{sk}(\phi_i^N(\tau))\xi_{i,k}^N(\tau)\d \tau\right)\\
\nonumber
&-\int_{0}^t\frac1N \sum_{i=1}^N Q_{sk}(\phi_i^N(\tau))\xi_{i,k}^N(\tau)\d \tau
\end{align}
with
\begin{align*}
K^N(t)&=\Big(\sum_{k:k\neq s}K_{sk}^N(t)-K_{ks}^N(t)\Big)_{s\in\mathcal{S}}
\end{align*}
and
\begin{align*}
H^N(t)=\frac1N \sum_{i=1}^N Q(\phi_i^N(t))\xi_{i}^N(t) - Q(x^N(t))x^N(t).
\end{align*}

Lemma \ref{l:1} is the counterpart of Lemma \ref{l:hnt1} for the general case.
\begin{lemma}
\label{l:1}
Assume A2. Then there is a random variable $R_1^N$ which is $G^N$-measurable, $R_1^N \xrightarrow{p} 0 $ and 
\begin{align}
&\left \| \frac{1}{N}\sum_{i=1}^{N}\phi_{i}^N(t)-x^N(t) \right \| \leq R_1^N .
\end{align}
\end{lemma}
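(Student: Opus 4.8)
The plan is to mirror the proof of Lemma \ref{l:hnt1} verbatim at the algebraic level and then redo only the variance estimate with the general weight distribution. First I would expand $\frac1N\sum_{i=1}^N\phi_i^N(t)=\frac{1}{N\langle d\rangle}\sum_{j=1}^N d^N(j)\,\xi_j^N(t)$ and split off $x^N(t)=\frac1N\sum_j\xi_j^N(t)$, leaving the remainder $\frac{1}{N\langle d\rangle}\sum_{j=1}^N\bigl(d^N(j)-\langle d\rangle\bigr)\xi_j^N(t)$. Since each $\xi_j^N(t)$ is a probability (indicator) vector with $\|\xi_j^N(t)\|_1=1$, the triangle inequality followed by Cauchy--Schwarz yields exactly the same candidate $R_1^N:=\sqrt{\frac{1}{N\langle d\rangle^2}\sum_{j=1}^N(d^N(j)-\langle d\rangle)^2}$, which is $G^N$-measurable and independent of $t$ and of the stochastic process. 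Crucially this step is insensitive to whether the weights are $0/1$ or general nonnegative, so it carries over unchanged.

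The only genuinely new computation is showing $R_1^N\xrightarrow{p}0$. Here I would bound $(\E R_1^N)^2\leq \E\bigl((R_1^N)^2\bigr)=\frac{1}{N\langle d\rangle^2}\sum_{j=1}^N \mathbb{D}^2(d^N(j))=\frac{\mathbb{D}^2(d^N(1))}{\langle d\rangle^2}$. Now $d^N(1)=\sum_{j=2}^N a_{1j}^N$ is a sum of $N-1$ i.i.d. terms each of variance $\tilde\sigma^2$, so the numerator equals $(N-1)\tilde\sigma^2$, while $\langle d\rangle^2=(N-1)^2\tilde\mu^2$. Substituting the formulas for $\tilde\mu,\tilde\sigma^2$ gives $\E((R_1^N)^2)=\frac{\tilde\sigma^2}{(N-1)\tilde\mu^2}=\frac{1}{N-1}\left(\frac{\sigma^2+\mu^2}{p^N\mu^2}-1\right)$, an expression of order $\frac{1}{(N-1)p^N}=\frac{\mu}{\langle d\rangle}$. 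Then I would invoke A2 to conclude $\langle d\rangle\geq cN^{1/2+\epsilon}\to\infty$, so this ratio tends to $0$; Markov's inequality then upgrades $\E R_1^N\to 0$ to convergence in probability.

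The step I expect to require the most care is the bookkeeping of the variance: making sure the dependence on the weight distribution enters only through $\tilde\mu$ and $\tilde\sigma^2$, and that the final quotient collapses to something controlled by $\langle d\rangle$ alone. There is no real obstacle beyond this. In contrast to the simple case, where $\mathbb{D}^2(d^N(1))/\langle d\rangle^2=O(1/N)$ automatically, here one must check that the quotient vanishes under the hypothesis $\langle d\rangle\to\infty$, which A2 supplies with room to spare. It is worth noting that A2's full $N^{1/2+\epsilon}$ strength is more than this particular lemma needs; it will be the covariance terms $c^N(i,j)$, in the general analogue of Lemma \ref{l:Dfismall}, that genuinely consume the density assumption.
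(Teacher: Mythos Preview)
Your proposal is correct and essentially identical to the paper's own proof: you define the same $R_1^N$ via Cauchy--Schwarz as in \eqref{l2:useful:1}, bound $\bigl(\E R_1^N\bigr)^2\le \mathbb{D}^2(d^N(1))/\langle d\rangle^2=\tfrac{\tilde\sigma^2}{(N-1)\tilde\mu^2}\le \tfrac{\sigma^2+\mu^2}{\mu^2}\cdot\tfrac{1}{\langle d\rangle}$, and invoke A2 plus Markov. Your remark that only $\langle d\rangle\to\infty$ is really used here, with the full $N^{1/2+\epsilon}$ reserved for the $c^N(i,j)$ estimate, is also in line with the paper.
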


Lemma \ref{l:1} can be reformulated with $\eta$ by observing that $\frac{1}{N}\sum_{i=1}^{N}\phi_{i}^N(t)=\E_{G, \xi}\left( \phi_{\eta}^N(t)\right)$.

\begin{proof}
$R_1^N$ is defined according to \eqref{l2:useful:1} again, and we have
\begin{align*}
&\left \| \frac{1}{N}\sum_{i=1}^{N}\phi_{i}^N(t)-x^N(t) \right \| \leq R_1^N, \\
& \left(E\left(R_1^N \right) \right)^2 \leq \frac{\mathbb{D}^2\left( d^N(1) \right)}{\langle d \rangle^2}.
\end{align*}
Then
\begin{align*}
&\frac{\mathbb{D}^2\left( d^N(1)\right)}{\langle d \rangle^2}=
\frac{1}{N-1}\frac{\tilde{\sigma}^2}{\tilde{\mu}^2}=\frac{1}{N-1} \frac{p^N\left(\sigma^2+\mu^2\right)-\left(p^N \right)^2 \mu^2}{\left(p^N\right)^2 \mu^2 } \leq \frac{1}{\langle d \rangle} \frac{\sigma^2+\mu^2}{\mu^2}\to 0
\end{align*} 
according to A2. So $\E\left(R_1^N \right) \to 0$ and $R_1^N \xrightarrow{p} 0$ as $N\to\infty$.
\end{proof}

\begin{lemma}
	\label{l:2}
\begin{align}
\mathbb{D}^2\left(N^2 c^N(1,2) \right)=O\left(N \langle d \rangle ^2 \right).
\end{align}
\end{lemma}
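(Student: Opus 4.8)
The plan is to reduce $N^2c^N(1,2)$ to sums of independent edge weights and then bound the variance of each resulting piece separately. Write $e:=a_{12}^N$, $A:=\sum_{k=3}^N a_{1k}^N$, $B:=\sum_{k=3}^N a_{2k}^N$ and $C:=\sum_{k=3}^N a_{1k}^N a_{2k}^N$. Since $a_{11}^N=a_{22}^N=0$ we have $d^N(1,2)=C$, $d^N(1)=e+A$ and $d^N(2)=e+B$, so that
\begin{align*}
N^2 c^N(1,2)=N\,d^N(1,2)-d^N(1)d^N(2)=NC-AB-e(A+B)-e^2.
\end{align*}
The value of this decomposition is that the three families $\{a_{1k}^N\}_{k\ge3}$, $\{a_{2k}^N\}_{k\ge3}$ and $e$ involve pairwise disjoint, hence independent, edges: $e$ is independent of $(A,B,C)$, and $A,B$ are independent of one another. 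I would record $m_r:=\E\big((a_{12}^N)^r\big)=p^N\E(w^r)$ for $r\ge1$, the key scaling fact being that every such moment is of order $p^N$; in particular $m_2=p^N\E(w^2)=O(\tilde{\mu})$ because $\tilde{\mu}=p^N\mu$, and $\tilde{\sigma}^2=O(p^N)=O(1)$.

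First I would treat the two dominant terms. As the summands $a_{1k}^N a_{2k}^N$, $k\ge3$, are i.i.d.\ with mean $\tilde{\mu}^2$ and second moment $m_2^2$, we get $\mathbb{D}^2(NC)=N^2(N-2)(m_2^2-\tilde{\mu}^4)=O(N^3 m_2^2)$, and since $m_2=O(\tilde{\mu})$ this equals $O(N^3\tilde{\mu}^2)=O(N\langle d\rangle^2)$, exactly the target order (this is the only place finiteness of $\E(w^2)$ is genuinely needed). For $AB$, independence of $A,B$ gives $\mathbb{D}^2(AB)=\mathbb{D}^2(A)\mathbb{D}^2(B)+\mathbb{D}^2(A)(\E B)^2+(\E A)^2\mathbb{D}^2(B)$; with $\E A=\E B=(N-2)\tilde{\mu}$ and $\mathbb{D}^2(A)=\mathbb{D}^2(B)=(N-2)\tilde{\sigma}^2$ the largest contribution is $2(N-2)^3\tilde{\sigma}^2\tilde{\mu}^2=O(N^3\tilde{\sigma}^2\tilde{\mu}^2)$, and since $\tilde{\sigma}^2=O(1)$ this is again $O(N^3\tilde{\mu}^2)=O(N\langle d\rangle^2)$; the remaining piece $(N-2)^2\tilde{\sigma}^4=O(N^2(p^N)^2)$ is of strictly smaller order.

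Next I would verify that the two terms containing $e$ are negligible. Using independence of $e$ from $(A,B)$, a direct computation gives $\mathbb{D}^2\big(e(A+B)\big)=m_2\,\mathbb{D}^2(A+B)+\tilde{\sigma}^2\big(\E(A+B)\big)^2=O(N\tilde{\mu}^2)+O(N^2\tilde{\mu}^2\tilde{\sigma}^2)=O(N^2\tilde{\mu}^2)=o(N\langle d\rangle^2)$, while $\mathbb{D}^2(e^2)=m_4-m_2^2=O(p^N)=O(1)$ is negligible (this is the only place a fourth moment of $w$ enters, finite under B2). Finally, since $N^2 c^N(1,2)$ is a sum of four terms, $\mathbb{D}^2\big(\sum_{i=1}^4 U_i\big)\le 4\sum_{i=1}^4\mathbb{D}^2(U_i)$ (equivalently, bound each covariance by Cauchy--Schwarz), so the total variance is at most a constant multiple of the largest individual variance, namely $O(N\langle d\rangle^2)$.

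The routine but error-prone part, and the step I would be most careful about, is the simultaneous bookkeeping in $N$ and in $p^N$: the target $N\langle d\rangle^2=N(N-1)^2\tilde{\mu}^2\asymp N^3(p^N)^2$ is matched on the nose by $\mathbb{D}^2(NC)$, so one must check that no term is secretly of higher order in $p^N$. The decisive structural facts that make the estimate close are that all moments $m_r$ scale like $p^N$ (hence $m_2\asymp\tilde{\mu}$) and that $\tilde{\sigma}^2$ stays bounded; every other quantity is a finite combination of these and is absorbed into the constant of the $O(\cdot)$.
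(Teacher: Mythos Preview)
Your argument is correct and reaches the same $O(N\langle d\rangle^2)$ bound, but via a genuinely different decomposition. The paper expands $\mathbb{D}^2\big(N\,d^N(1,2)-d^N(1)d^N(2)\big)$ into the three pieces $N^2\mathbb{D}^2(d^N(1,2))$, $-2N\cov(d^N(1,2),d^N(1)d^N(2))$ and $\mathbb{D}^2(d^N(1)d^N(2))$, introduces the covariance kernel $\gamma^N(i,j,k,l)=\cov(a_{1i}^N a_{2j}^N,\,a_{1k}^N a_{2l}^N)$, and bounds each piece by combinatorially counting the nonzero $\gamma$-terms according to their index pattern; its clever shortcut is the observation $\gamma^N\ge 0$, which lets the cross-covariance term be bounded above by $0$ with no further computation. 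Your route instead isolates the single special edge $e=a_{12}^N$ algebraically and exploits the mutual independence of the three edge families $\{e\}$, $\{a_{1k}^N\}_{k\ge3}$, $\{a_{2k}^N\}_{k\ge3}$; this makes the dominant contribution $\mathbb{D}^2(NC)$ and the product structure $\mathbb{D}^2(AB)$ completely explicit and replaces the combinatorial case analysis by four direct moment computations, at the harmless cost of a constant factor from Cauchy--Schwarz when reassembling the sum. A small bonus of your approach is that it surfaces the implicit fourth-moment requirement on $w$ (through $\mathbb{D}^2(e^2)=m_4-m_2^2$) that the paper's case analysis glosses over; note, however, that the lemma is also invoked under assumptions (A1,\,A2), where B2 is not in force, so your appeal to B2 for finiteness does not cover every application---this is a gap shared with the paper rather than one introduced by your argument.
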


\begin{proof}
\begin{align}
\label{eq:cvar}
\begin{split}
&\mathbb{D}^2\left(N^2 c^N(1,2) \right)=\mathbb{D}^2\left(Nd^N(1,2)-d^N(1)d^N(2) \right)= \\
&N^2\mathbb{D}^2\left( d^N(1,2)\right)-2N \cov \left(d^N(1,2),d^N(1)d^N(2) \right)+\mathbb{D}^2\left(d^N(1)d^N(2) \right).
\end{split}
\end{align}
To address the various terms in \eqref{eq:cvar}, we use the notation
\begin{align*}
&\gamma^N(i,j,k,l):=\cov(a_{1i}^Na_{2j}^N, a_{1k}^Na_{2l}^N).
\end{align*}

The following symmetry properties hold:
\begin{align*}
&\gamma^N(i,j,k,l)=\gamma^{N}(j,i,k,l)=\gamma^N(i,j,l,k)=\gamma^N(k,l,i,j).
\end{align*}

We aim to identify and bound the nonzero terms among the $\gamma^N(i,j,k,l)$'s. 

$\gamma^N(i,j,k,l)$ can only differ from $0$ if either $i=k\neq 1$ or $j=l\neq 2$ due to independence of $a_{1i}^Na_{2j}^N$ and $a_{1k}^Na_{2l}^N$. Depending on whether one or both hold, there are essentially two distinct cases.

The first case: $i=k, j \neq l$.
\begin{align*}
&\gamma^N(i,j,i,k)=\E \left[ \left(a_{1i}^N\right)^2a_{2j}^Na_{2l}^N \right]-\E^2\left( a_{1i}^N\right)\E\left( a_{2j}^N\right)\E\left( a_{2k}^N\right)= \\
&p^N\left(\sigma^2+\mu^2\right)\left(p^N\mu \right)^2-\left(p^N\mu \right)^4=(p^N)^3\mu^2\left[\sigma^2+\left(1-p^N\right)\mu^2 \right]
\end{align*}
Overall, $0\leq \gamma^N(i,j,i,k)=O\left( \left(p^N\right)^3 \right)$ in this case.

The second case: $i=k, j=l$. Here, $\gamma^N(i,j,i,j)=\mathbb{D}^2 \left(a_{1i}^N a_{2j}^N \right) \geq 0$ and
\begin{align*}
&\gamma^N(i,j,i,j)=\E\left[\left(a_{1i}^N\right)^2\left(a_{2j}^N\right)^2 \right]-\E^2\left( a_{1i}^N\right)\E^2\left( a_{2j}^N\right)= \\
& \left[p^N(\sigma^2+\mu^2)\right]^2-\left(p^N \mu\right)^4=O\left( \left( p^N\right)^2 \right).
\end{align*}

Altogether, we have $\gamma^N(i,j,k,l)\geq 0$ for all $i,j,k,l$.

With the values of $\gamma^N$ above, estimating the terms in \eqref{eq:cvar} becomes a combinatorial problem:
\begin{align*}
& N^2 \mathbb{D}^2 \left( d^N(1,2) \right)=N^2\mathbb{D}^2 \left( \sum_{i=1}^N a_{1i}^Na_{2i}^N \right)=N^2\mathbb{D}^2 \left( \sum_{i=3}^N a_{1i}^Na_{2i}^N \right)= \\
& N^2\sum_{i=3}^N\mathbb{D}^2 \left(  a_{1i}^Na_{2i}^N \right)=N^2(N-2)\gamma^N(3,3,3,3)=O\left(N^3\left(p^N\right)^2\right)=O\left(N\langle d \rangle^2 \right).
\end{align*}

The second term in \eqref{eq:cvar} is bounded from above by 0 due to $\gamma^N(i,j,k,l)\geq 0$.

The third term in \eqref{eq:cvar} is
\begin{align}
\label{gamma}
&\mathbb{D}^2\left(d^N(1)d^N(2) \right)=\sum_{i=1}^N\sum_{j=1}^N\sum_{k=1}^N\sum_{l=1}^N\gamma^N(i,j,k,l).
\end{align}
\eqref{gamma} has $O(N^3)$ nonzero terms where either $i=k,j\neq l$ or $i\neq k,j=l$. Each of these terms is $O\left(\left( p^N\right)^3 \right)$, so their total contribution is $O\left(N^3(p^N)^3\right)=\left(\langle d \rangle^3  \right)=O \left(N\langle d \rangle^2 \right)$.
\eqref{gamma} also has $O(N^2)$ nonzero terms where $i=k$ and $j=l$. Each of these terms is $O\left( \left( p^N\right)^2\right)$, so their total contribution is $O\left(N^2(p^N)^2\right)=O\left(\langle d \rangle^2  \right)$.
 
Hence $\mathbb{D}^2\left(d^N(1)d^N(2) \right)=O\left(N\langle d \rangle^2  \right) $, so all three terms in \eqref{eq:cvar} are $O\left(N\langle d \rangle^2  \right) $, concluding the proof of the lemma.
\end{proof}

The next lemma is the counterpart of Lemma \ref{l:Dfismall}.

\begin{lemma}
\label{l:3} Assume A2. Then there exist $G^N$-measurable random variables $R_2^N$ such that
\begin{align}
& \sum_{s=1}^S \mathbb{D}^2_{G, \xi}\left( \phi_{\eta,s}^N(t) \right) \leq R_2^N.
\end{align}
and $R^N_2 \xrightarrow{p} 0$ as $N\to\infty$.
\end{lemma}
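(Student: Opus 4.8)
The plan is to recycle the structure of the proof of Lemma~\ref{l:Dfismall} for the \emph{definition} of $R_2^N$, and then replace the law-of-large-numbers estimate on $c^N(1,2)$ (which no longer works once $p^N$ is allowed to scale with $N$) by a second-moment estimate that combines assumption A2 with the variance bound of Lemma~\ref{l:2}. First I would observe that the conditional-variance computation \eqref{l3:useful:1} uses no special feature of the simple \erg{}: it only relies on $\phi_{\eta,s}^N(t)=\frac{1}{\langle d \rangle}\sum_i a_{\eta i}^N \xi_{i,s}^N(t)$ and on $0\leq\sum_s \xi_{i,s}^N(t)\xi_{j,s}^N(t)\leq 1$. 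Hence the identical chain of inequalities yields
\[
\sum_{s=1}^S \mathbb{D}^2_{G,\xi}\left(\phi_{\eta,s}^N(t)\right)\leq \frac{1}{\langle d \rangle^2}\sum_{i=1}^N\sum_{j=1}^N |c^N(i,j)| =: R_2^N,
\]
which is $G^N$-measurable and independent of $t$. Since $R_2^N\geq 0$, Markov's inequality reduces the problem to showing $\E(R_2^N)\to 0$, and I would split the double sum into its diagonal and off-diagonal parts.

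For the diagonal terms the argument is unchanged: the bound $0\leq c^N(i,i)\leq d^N(i)/N$ gives
\[
\E\left[\frac{1}{\langle d \rangle^2}\sum_{i=1}^N c^N(i,i)\right]\leq \frac{N}{\langle d \rangle^2}\cdot\frac{\E(d^N(1))}{N}=\frac{1}{\langle d \rangle}\to 0
\]
under A2, because $\langle d \rangle\to\infty$. The off-diagonal part is where the general case genuinely departs from Lemma~\ref{l:Dfismall}: the prefactor $\frac{N(N-1)}{\langle d \rangle^2}$ is no longer $O(1)$, so I cannot simply invoke $\E|c^N(1,2)|\to 0$. Instead I would use $\E|c^N(1,2)|\leq \left|\E(c^N(1,2))\right|+\sqrt{\mathbb{D}^2(c^N(1,2))}$ and estimate the two pieces separately.

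For the mean, a direct computation of $\E(d^N(1,2))$ and $\E(d^N(1)d^N(2))$ shows the leading terms cancel, leaving $\E(c^N(1,2))=-\frac{p^N(\sigma^2+\mu^2)}{N^2}=O\!\left(\langle d \rangle/N^3\right)$ (using $p^N=\langle d \rangle/((N-1)\mu)$). For the fluctuation, Lemma~\ref{l:2} gives $\mathbb{D}^2(c^N(1,2))=O\!\left(\langle d \rangle^2/N^3\right)$, so $\sqrt{\mathbb{D}^2(c^N(1,2))}=O\!\left(\langle d \rangle/N^{3/2}\right)$, which dominates the mean term. Combining these,
\[
\E\left[\frac{2}{\langle d \rangle^2}\sum_{i<j}|c^N(i,j)|\right]=\frac{N(N-1)}{\langle d \rangle^2}\,\E|c^N(1,2)|=O\!\left(\frac{N^2}{\langle d \rangle^2}\cdot\frac{\langle d \rangle}{N^{3/2}}\right)=O\!\left(\frac{N^{1/2}}{\langle d \rangle}\right),
\]
and A2 forces $\frac{N^{1/2}}{\langle d \rangle}\leq \frac{1}{c}N^{-\epsilon}\to 0$. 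Thus $\E(R_2^N)\to 0$ and $R_2^N\xrightarrow{p}0$. The main obstacle is precisely this off-diagonal estimate: controlling $\E|c^N(1,2)|$ has no clean law-of-large-numbers shortcut and genuinely needs the variance bound of Lemma~\ref{l:2}, and the whole argument is tight in the sense that the exponent $\tfrac12+\epsilon$ in A2 is exactly what is required to beat the $N^{1/2}$ blow-up produced by the $O(N^2)$ off-diagonal pairs.
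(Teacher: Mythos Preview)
Your argument follows the paper's proof essentially line by line: the same definition of $R_2^N$ via \eqref{l3:useful:1}, the same diagonal/off-diagonal split, the same centring $|c^N(1,2)|\leq |\E c^N(1,2)|+\mathbb{D}(c^N(1,2))$, and the same appeal to Lemma~\ref{l:2} to produce the $O(\sqrt{N}/\langle d\rangle)=O(N^{-\epsilon})$ bound under A2. Your computation of $\E(c^N(1,2))$ is in fact sharper than the paper's $O((p^N)^2/N)$, but both are more than enough.

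One small slip: the diagonal bound $c^N(i,i)\leq d^N(i)/N$ that you quote is specific to the unweighted case $a_{ij}^N\in\{0,1\}$. In the general weighted model $(a_{ik}^N)^2\neq a_{ik}^N$, so you need the form the paper uses, $c^N(i,i)\leq \frac{1}{N}\sum_{k}(a_{ik}^N)^2$, whose expectation is $\frac{N-1}{N}p^N(\sigma^2+\mu^2)$; this still gives the diagonal contribution $O(1/\langle d\rangle)\to 0$, so the fix is cosmetic.
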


\begin{proof}
Just like for Lemma \ref{l:Dfismall}, we have
\begin{align}
\label{eq:R2general}
&\sum_{s=1}^S \mathbb{D}^2_{G, \xi}\left( \phi_{\eta,s}^N(t) \right) \leq \frac{1}{\langle d \rangle^2}\sum_{i=1}^N \sum_{j=1}^N |c^N(i,j)|=R_2^N.
\end{align}
 
Since $0 \leq c^N(i,i) \leq \frac{1}{N}\sum_{k=1}^N \left( a_{ik}^N\right)^2$, the diagonal terms in \eqref{eq:R2general} can be handled with
\begin{align*}
& \E \left [\frac{1}{\langle d \rangle^2}\sum_{i=1}^N \left |c^N(i,i) \right | \right]=\frac{N}{\langle d \rangle^2} \E \left |c^N(1,1) \right | \leq  \frac{N}{\langle d \rangle^2} \E \left [\frac{1}{N}\sum_{k=1}^N \left( a_{1k}^N\right)^2\right]= \\
&\frac{N-1}{\langle d \rangle^2}p^N\left(\sigma^2+\mu^2\right)=O\left( \frac{1}{\langle d \rangle}\right) \to 0.
\end{align*}

For the non-diagonal terms in \eqref{eq:R2general} we first calculate $\E\left(c^N(1,2) \right)$.
 
\begin{align*}
&\E\left(c^N(1,2) \right)=\frac{1}{N}\sum_{k=1}^N \E\left(a_{1k}^Na_{2k}^N \right)-\frac{1}{N^2}\sum_{k=1}^N
\sum_{l=1}^N\E\left(a_{1k}^Na_{2l}^N \right)= \\
& \frac{1}{N}\sum_{k=3}^N \E\left(a_{1k}^Na_{2k}^N \right)-\frac{1}{N^2}\sum_{k=3}^N
\sum_{l=3}^N\E\left(a_{1k}^Na_{2l}^N \right)-\frac{1}{N^2}\E \left(a_{12}^N \right)^2= \\
& \frac{N-2}{N}\left(p^N \mu \right)^2- \left(\frac{N-2}{N}\right)^2\left(p^N \mu \right)^2+O\left( \frac{\left( p^N\right)^2}{N^2}\right)= \\
& \left(p^N \mu \right)^2 \frac{N-2}{N}\left[1-\frac{N-2}{N} \right]+O\left( \frac{\left( p^N\right)^2}{N^2}\right)= 
O\left( \frac{\left( p^N\right)^2}{N}\right).
\end{align*}
Hence the non-diagonal terms in \eqref{eq:R2general} have the upper bound
\begin{align*}
&\frac{2}{\langle d \rangle^2}\sum_{i<j}  \left |c^N(i,j) \right | \leq \frac{2}{\langle d \rangle^2}\sum_{i<j}  \left |c^N(i,j)-\E \left(c^N(1,2) \right) \right |+\frac{2}{\langle d \rangle^2}\sum_{i<j}  \left | \E \left( c^N(1,2) \right) \right |= \\
&   \frac{2}{\langle d \rangle^2}\sum_{i<j}  \left |c^N(i,j)-\E \left(c^N(1,2) \right) \right |+O\left(\frac{1}{(Np^N)^2}\cdot N^2\cdot\frac{(p^N)^2}{N} \right) = \\
&   \frac{2}{\langle d \rangle^2}\sum_{i<j}  \left |c^N(i,j)-\E \left(c^N(1,2) \right) \right |+O\left(\frac{1}{N} \right).
\end{align*}
The expectation of the first term can be estimated using Lemma \ref{l:2}:
\begin{align*}
& \E \left [\frac{2}{\langle d \rangle^2}\sum_{i<j}  \left |c^N(i,j)-\E \left(c^N(1,2) \right) \right | \right]=\frac{N(N-1)}{\langle d \rangle^2}\E \left|c^N(1,2)-\E \left(c^N(1,2) \right) \right| \leq \\
&\frac{N^2}{\langle d \rangle ^2}\mathbb{D} \left(c^N(1,2) \right)=\frac{1}{\langle d \rangle ^2}\mathbb{D} \left(N^2c^N(1,2) \right)=O\left( \frac{\sqrt{N}}{\langle d \rangle} \right)=O\left( N^{-\epsilon} \right) \to 0,
\end{align*}
where the last equality is due to A2. Therefore, $ R_2^N \xrightarrow{p}0$.
\end{proof}

Lemma \ref{l:3} shows that $\phi_i^N(t) $ does not vary much for a typical vertex $i$, and Lemma \ref{l:1} shows that the average of all $\phi_i^N(t)$ is close to the global average $x^N(t)$. Our next aim is to show that replacing each local environment by $x^N(t)$ results in a small error. This is where the condition A1 or B2 will play a role. Without either A1 or B2, it might be the case that $Q_{sk}$ increases rapidly, incurring a large global error resulting from the few vertices where $\phi_{i}^N(t)\approx x^N(t)$ breaks down. Ensuring either that there are no such vertices at all (which will follow from B2) or that $Q_{sk}$ does not increase rapidly (A1) eliminates this issue.

The magnitude of $ \phi_i^N(t) $ is directly related to the degrees $d^N(i)$ due to
\begin{align*}
& \left \| \phi_{i}^N(t) \right \|=\sum_{s=1}^S \left | \phi_{i,s}^N(t) \right|=\sum_{s=1}^S \frac{1}{\langle d \rangle }\sum_{j=1}^N a_{ij}^N \xi_{i,s}^N(t)= \frac{1}{\langle d \rangle }\sum_{j=1}^N a_{ij}^N=\frac{d^N(i)}{\langle d \rangle }.
\end{align*}   


For some $M>1$ we define
$$\Delta_{M}:=\{ v \in \mathbb{R}^S : v_{s} \geq 0\,\forall s \in \mathcal{S}, \ \sum_{s=1}^S v_{s} \leq M \}$$
and the event
\begin{align*}
 & \mathcal{L}_M^N:=\bigcap_{i=1}^N\left \{ \frac{d^N(i)}{\langle d \rangle } \leq M \right \}.
 \end{align*}
 
\begin{lemma}
\label{l:4}
Assume B1,B2. Then there exists some $1<M<\infty$ not depending on $N$ such that
$$\pr\left(\mathcal{L}^N_M \right) \to 1$$
as $N\to\infty$.
\end{lemma}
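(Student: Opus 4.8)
The plan is to combine a union bound over the $N$ vertices with a Cram\'er-type exponential tail estimate for a single degree, using B2 to control the moment generating function. Under B1 every edge is present, so $a_{ij}^N = w_{ij}^N$ for $i \neq j$ and $\langle d \rangle = (N-1)\mu$; consequently $\frac{d^N(i)}{\langle d \rangle} = \frac{1}{(N-1)\mu}\sum_{j \neq i} w_{ij}^N$ is a normalized sum of $N-1$ i.i.d.\ nonnegative weights. Since the $d^N(i)$ are identically distributed, a union bound gives
\begin{align*}
\pr\left( \left(\mathcal{L}^N_M\right)^c \right) \leq N\,\pr\left( \frac{d^N(1)}{\langle d \rangle} > M \right) = N\,\pr\left( \sum_{j=2}^N w_{1j}^N > M(N-1)\mu \right),
\end{align*}
and it suffices to show the right-hand side tends to $0$ for a suitable fixed $M$.

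The next step is the Chernoff bound: for any $s > 0$ in the range where $\Lambda(s) < \infty$ (nonempty by B2), independence of the weights yields
\begin{align*}
\pr\left( \sum_{j=2}^N w_{1j}^N > M(N-1)\mu \right) \leq e^{-sM(N-1)\mu}\prod_{j=2}^N \E\!\left( e^{s w_{1j}^N} \right) = \exp\!\left( -(N-1)\bigl( sM\mu - \Lambda(s) \bigr) \right),
\end{align*}
where I use that the weights share a distribution independent of $N$, so $\Lambda$ and $\mu$ are constants. Combining with the union bound, $\pr\left( (\mathcal{L}^N_M)^c \right) \leq N\exp\!\left( -(N-1)( sM\mu - \Lambda(s) ) \right)$, which tends to $0$ as soon as the exponential rate $I(M,s) := sM\mu - \Lambda(s)$ is strictly positive, the polynomial factor $N$ being overwhelmed.

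The crux is therefore to exhibit a fixed $M > 1$ together with an admissible $s > 0$ for which $I(M,s) > 0$, equivalently $\Lambda(s) < sM\mu$. Here I would invoke the behaviour of $\Lambda$ at the origin: $\Lambda(0) = 0$ and $\Lambda'(0) = \mu$ (the weights being nonnegative, B2 makes $\Lambda$ smooth on a right neighbourhood of $0$), so $\Lambda(s) = s\mu + o(s)$ and hence $\frac{\Lambda(s)}{s\mu} \to 1$ as $s \to 0^+$. Thus for any fixed $M > 1$ there is a sufficiently small $s > 0$ with $\frac{\Lambda(s)}{s\mu} < M$; and since $w \geq 0$ gives $\E(e^{sw}) \leq \E(e^{s_0 w}) < \infty$ whenever $0 < s \leq s_0$, this $s$ is admissible. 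Fixing such a pair makes $I(M,s) > 0$ and finishes the argument. I expect this final step to be the main obstacle: B2 only guarantees finiteness of $\Lambda$ near $s = 0$, which forces us to work with small $s$, and it is precisely the first-order expansion $\Lambda(s) \approx s\mu$ that shows the threshold $\Lambda(s)/(s\mu)$ drops to $1$, so that every $M > 1$ is attainable.
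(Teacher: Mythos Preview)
Your proof is correct and follows essentially the same route as the paper: union bound over vertices followed by an exponential Chebyshev/Chernoff estimate, yielding $\pr\bigl((\mathcal{L}_M^N)^c\bigr)\le N\exp\bigl(-(N-1)(sM\mu-\Lambda(s))\bigr)$. The only difference is in the final choice of constants: the paper simply fixes the $s>0$ furnished by B2 and then picks $M>\Lambda(s)/(s\mu)$, which immediately makes the exponent negative; you instead work harder, using the expansion $\Lambda(s)=s\mu+o(s)$ to show that \emph{every} $M>1$ admits a suitable small $s$. Your version proves a slightly stronger statement than required, but the extra analysis of $\Lambda$ near $0$ is unnecessary for the lemma as stated.
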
    

\begin{proof} 
According to B2, $ \log \Lambda(s)=\log \left( \E \exp \left( w_{12}^N s\right) \right)$ is finite for some $s>0$. Using that $s$, we have
\begin{align*}
&\pr\left(\bar{\mathcal{L}}_M^N \right)=\pr \left(\bigcup_{i=1}^N \left \{ \frac{d^N(i)}{\langle d \rangle} >M \right \} \right) \leq \sum_{i=1}^N \pr \left(\frac{d^N(i)}{\langle d \rangle} >M\right)= \\
&N\pr \left(\frac{d^N(1)}{\langle d \rangle} >M\right)=N\pr \left( \frac{1}{\langle d \rangle}\sum_{i=2}^{N}w_{1i}^N >M \right)= \\
& N\pr \left( \frac{s}{N-1}\sum_{i=2}^{N}w_{1i}^N >sM\mu \right) \leq N e^{-sM\mu(N-1)}\left[E(e^{sw_{12}^N})\right]^{N-1}
\end{align*}
due to exponential Chebyshev's inequality. Then $M$ is chosen large enough so that the exponent on the right hand side is negative and $\pr\left(\bar{\mathcal{L}}_M^N \right)\to 0$ as $N\to\infty$.
\end{proof}

$M$ is fixed from now on. When $\mathcal{L}_M^N$ is true, $Q$ can be restricted to $\Delta_{M}$ which is compact; $L_{sk}$ and $L_Q$ denote the corresponding Lipschitz-constants of $Q$ on this domain.

$\Lleq$ will denote that inequality holds when $\mathcal{L}_M^N$ is true.

\begin{lemma}
\label{l:5}
Assume either (A1, A2) or (B1, B2). Then for any $T>0$ there exist $G^N$-measurable random variables $R^N$ such that
\begin{align}
& \sup_{0\leq t\leq T} \|H^N(t)\| \leq R^N
\end{align}
and $R^N \xrightarrow{p} 0 $ as $N\to \infty $.
\end{lemma}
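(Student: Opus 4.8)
The plan is to reproduce, almost verbatim, the argument proving Lemma~\ref{l:hnt2} (the simple-case analogue), the only new issue being that in the general setting the Lipschitz constant of $Q$ need not be global. Rewriting $H^N(t)$ through $\eta$ as in \eqref{l5:useful:1}, I would first record the purely algebraic identity $H^N(t)=\E_{G,\xi}\big[(Q(\phi_\eta^N(t))-Q(x^N(t)))\xi_\eta^N(t)\big]$, valid because $x^N(t)=\E_{G,\xi}(\xi_\eta^N(t))$ and $Q(x^N(t))$ is constant under $\E_{G,\xi}$. Using $\|\xi_\eta^N(t)\|=1$ and the Lipschitz bound $\|Q(\phi_\eta^N(t))-Q(x^N(t))\|\le L_Q\|\phi_\eta^N(t)-x^N(t)\|$, then splitting $\phi_\eta^N(t)-x^N(t)$ into a bias part and a centred part exactly as in \eqref{l5:useful:1}, one arrives at the uniform-in-$t$ estimate $\sup_{0\le t\le T}\|H^N(t)\|\le L_Q(R_1^N+\sqrt{S R_2^N})$, with $R_1^N,R_2^N$ the $G^N$-measurable quantities supplied by Lemmas~\ref{l:1} and \ref{l:3}.

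Under (A1, A2), $Q$ is globally Lipschitz, so $L_Q<\infty$ is a genuine constant and the inequality above holds for every $N$ and every realization of the environment. I would simply set $R^N:=L_Q(R_1^N+\sqrt{S R_2^N})$; this is $G^N$-measurable and independent of $t$, and since A2 holds, Lemmas~\ref{l:1} and \ref{l:3} give $R_1^N\xrightarrow{p}0$ and $R_2^N\xrightarrow{p}0$, whence $R^N\xrightarrow{p}0$.

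The genuinely new case is (B1, B2), where $Q$ is only locally Lipschitz and the Lipschitz step can fail when some $\phi_i^N(t)$ is large. Here I would work on the event $\mathcal{L}_M^N$ of Lemma~\ref{l:4}: on $\mathcal{L}_M^N$ every $\|\phi_i^N(t)\|=d^N(i)/\langle d\rangle\le M$ and $x^N(t)\in\Delta\subseteq\Delta_M$ (using $M>1$), so all arguments of $Q$ lie in the compact set $\Delta_M$ and $L_Q$ may be taken as the Lipschitz constant of $Q$ there. The same chain then gives $\sup_{0\le t\le T}\|H^N(t)\|\Lleq L_Q(R_1^N+\sqrt{S R_2^N})$. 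Since B1 ($p^N\equiv1$) implies A2, Lemmas~\ref{l:1} and \ref{l:3} still apply, so the right-hand side tends to $0$ in probability.

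The one technical point, which I expect to be the main obstacle, is to define a single $G^N$-measurable $R^N$ dominating $\sup_t\|H^N(t)\|$ on the whole probability space and not merely on $\mathcal{L}_M^N$. For a fixed graph the degrees are fixed, so $\phi_i^N(t)$ ranges over a compact set determined by $G^N$ and, $Q$ being continuous, $\sup_{0\le t\le T}\|H^N(t)\|$ admits a finite $G^N$-measurable bound $B^N$. I would therefore set $R^N:=L_Q(R_1^N+\sqrt{S R_2^N})\mathbbm{1}_{\mathcal{L}_M^N}+B^N\mathbbm{1}_{\bar{\mathcal{L}}_M^N}$. The required inequality then holds everywhere, and since $\{R^N\neq L_Q(R_1^N+\sqrt{S R_2^N})\}\subseteq\bar{\mathcal{L}}_M^N$ with $\pr(\bar{\mathcal{L}}_M^N)\to0$ by Lemma~\ref{l:4}, the correction term is $o_p(1)$ and $R^N\xrightarrow{p}0$. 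This bookkeeping---keeping a deterministic pointwise bound while only controlling the Lipschitz constant on a high-probability event---is exactly what the event $\mathcal{L}_M^N$ and the constant $M$ were introduced to handle.
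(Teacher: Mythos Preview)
Your proposal is correct and matches the paper's proof essentially line for line: both cases (A1,\,A2) and (B1,\,B2) are handled exactly as you describe, with the same use of Lemmas~\ref{l:1}, \ref{l:3} and \ref{l:4} and the same piecewise definition of $R^N$ via the indicator of $\mathcal{L}_M^N$. The only cosmetic difference is that the paper writes the off-event bound $B^N$ explicitly as $2\max_{v\in\Delta_{M'}}\|Q(v)\|$ with $M'=\max_i d^N(i)/\langle d\rangle$, which is precisely the finite $G^N$-measurable bound you allude to.
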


\begin{proof}
Assumption A1 provides global Lipschitz continuity, so the proof of Lemma \ref{l:hnt2} remains valid in this case.
 
Assuming B1 and B2, we can still use \eqref{l5:useful:1} when $\mathcal{L}_M^N$ is true to get 
\begin{align*}
& \left \| \frac{1}{N}\sum_{i=1}^NQ \left(\phi_i^N(t) \right)\xi_{i}^N(t)-f \left( x^N(t) \right) \right \| \ \Lleq L_Q\left(R_1^N+\sqrt{SR_2^N} \right).
\end{align*}

On $\bar{\mathcal{L}}_M^N$ we use the trivial upper bound
\begin{align*}
& \left \| \frac{1}{N}\sum_{i=1}^NQ \left(\phi_i^N(t) \right)\xi_{i}^N(t)-f \left( x^N(t) \right) \right \|  \leq 2 \max_{v \in \Delta_{M'}} \left \|Q(v) \right \|
\end{align*}
where $ M':=\frac{\max_{1 \leq i \leq N}d^N(i)}{\langle d \rangle} $. Then
\begin{align*}
&R^N:= 
\begin{cases}
 L_Q\left(R_1^N+\sqrt{SR_2^N} \right) \ \textit{if} \ \mathcal{L}^N \\
 2 \max_{v \in \Delta_{M'}} \left \|Q(v) \right \|  \ \textit{if} \ \bar{\mathcal{L}}^N.
\end{cases}
\end{align*}
Clearly, $\|H^N(t)\| \leq R^N$ for all $t\geq 0$. It is also true that $R^N \xrightarrow{p}0$ since
\begin{align*}
&\pr\left(R^N > \varepsilon \right) \leq  \pr \left( \bar{\mathcal{L}}^N \right)+\pr\left( L_Q\left(R_1^N+\sqrt{SR_2^N} \right)>\varepsilon \right) \to 0
\end{align*}
as $N\to\infty$.
\end{proof}

\begin{lemma}
\label{l:6} 
Assume either (A1, A2) or (B1, B2). Then for any $T>0$,
\begin{align*}
& \sup_{0 \leq t \leq T}\left \| K^N(t) \right \| \xrightarrow{p} 0.
\end{align*}
\end{lemma}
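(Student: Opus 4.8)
The plan is to follow the structure of the proof of Lemma~\ref{l:knt}, reducing everything to a single pair $s\neq k$ and exploiting the functional strong law of large numbers for the rate-$1$ Poisson process $Y_{sk}$. Writing
$$\lambda_{sk}^N(t):=\int_{0}^t\frac1N\sum_{i=1}^N Q_{sk}(\phi_i^N(\tau))\xi_{i,k}^N(\tau)\,\d\tau,$$
the integrand is nonnegative (a transition rate times an indicator), so $\lambda_{sk}^N(\cdot)$ is nondecreasing with $\lambda_{sk}^N(0)=0$, and $K_{sk}^N(t)=\frac1N Y_{sk}(N\lambda_{sk}^N(t))-\lambda_{sk}^N(t)$. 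Consequently
$$\sup_{0\leq t\leq T}\bigl|K_{sk}^N(t)\bigr|\leq \sup_{0\leq x\leq \lambda_{sk}^N(T)}\Bigl|\tfrac1N Y_{sk}(Nx)-x\Bigr|.$$
In the simple case the bound $\|\phi_i^N\|\leq 1/p$ made $\lambda_{sk}^N(T)$ deterministically bounded; here it is genuinely random and a priori unbounded, which is the main obstacle. I would resolve it by showing that $\lambda_{sk}^N(T)$ is tight (bounded in probability) and then truncating.

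For tightness I treat the two hypotheses separately. Under A1 and A2, global Lipschitz continuity gives $Q_{sk}(\phi)\leq |Q_{sk}(0)|+L_{sk}\|\phi\|$, hence
$$\lambda_{sk}^N(T)\leq T\Bigl(|Q_{sk}(0)|+L_{sk}\,\tfrac1N\sum_{i=1}^N\|\phi_i^N\|\Bigr)=T\Bigl(|Q_{sk}(0)|+L_{sk}D^N\Bigr),\qquad D^N:=\frac{1}{N\langle d\rangle}\sum_{i=1}^N d^N(i).$$
Since $\E(D^N)=1$ and $D^N\geq 0$, Markov's inequality makes $D^N$ tight, so for any $\delta>0$ there is a constant $C$ with $\limsup_N\pr(\lambda_{sk}^N(T)>C)<\delta$. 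Under B1 and B2, on the event $\mathcal{L}_M^N$ of Lemma~\ref{l:4} we have $\|\phi_i^N\|=d^N(i)/\langle d\rangle\leq M$ for every $i$, so $Q_{sk}$ is evaluated only on the compact set $\Delta_M$ and is bounded there by some $B<\infty$; thus $\lambda_{sk}^N(T)\leq BT$ on $\mathcal{L}_M^N$, and $\pr(\lambda_{sk}^N(T)>BT)\leq \pr(\bar{\mathcal{L}}_M^N)\to 0$. Either way $\lambda_{sk}^N(T)$ is tight.

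Finally I would combine tightness with the Poisson law of large numbers. For fixed $\varepsilon>0$ and a truncation level $C$,
$$\pr\Bigl(\sup_{0\leq t\leq T}|K_{sk}^N(t)|>\varepsilon\Bigr)\leq \pr\bigl(\lambda_{sk}^N(T)>C\bigr)+\pr\Bigl(\sup_{0\leq x\leq C}\bigl|\tfrac1N Y_{sk}(Nx)-x\bigr|>\varepsilon\Bigr).$$
The second term tends to $0$ as $N\to\infty$ for each fixed $C$ by the functional strong law of large numbers for the Poisson process (\cite{wardsuppl}, Section~3.2), exactly as in Lemma~\ref{l:knt}. Choosing $C$ large (case A1, A2) or any $C\geq BT$ (case B1, B2) forces the $\limsup_N$ of the first term below an arbitrary $\delta$, so $\limsup_N\pr(\sup_{0\leq t\leq T}|K_{sk}^N(t)|>\varepsilon)\leq\delta$; letting $\delta\downarrow 0$ gives $\sup_{0\leq t\leq T}|K_{sk}^N(t)|\xrightarrow{p}0$. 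Summing over the finitely many pairs $s\neq k$ yields $\sup_{0\leq t\leq T}\|K^N(t)\|\xrightarrow{p}0$. The genuinely new ingredient compared with the simple case is precisely the tightness of the cumulative rate $\lambda_{sk}^N(T)$, which is where A2 (via $\E(D^N)=1$) or B2 (via Lemma~\ref{l:4}) enters.
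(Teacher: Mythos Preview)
Your proof is correct and follows essentially the same approach as the paper: reduce to a single pair $s\neq k$, bound the cumulative rate $\lambda_{sk}^N(T)$ on a high-probability event (via the event $\mathcal{L}_M^N$ under B1--B2, or via Markov's inequality on $D^N=\frac{1}{N\langle d\rangle}\sum_i d^N(i)$ under A1--A2), and then invoke the functional SLLN for the Poisson process. One small remark: the identity $\E(D^N)=1$ holds by definition of $\langle d\rangle$, so A2 is not actually needed in this particular lemma (the paper's proof likewise does not use it here).
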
	

\begin{proof} 
It is sufficient to show that for any $s \neq k$ in $\mathcal{S}$ and for all $T >0$, we have
\begin{align*}
\sup_{0 \leq t \leq T} \left |K_{sk}^N(t) \right | \xrightarrow{p} 0.
\end{align*}

$s$ and $k$ are fixed for the rest of the proof of Lemma \ref{l:6}.

First we assume (B1, B2). Then, using Lemma \ref{l:4},
\begin{align*}
& \int_{0}^t \frac{1}{N}\sum_{j=1}^N Q_{sk}\left( \phi_{j}^N(\tau) \right)\xi_{k,j}^N(\tau) \d \tau \Lleq \int_{0}^{t} Q_{sk}(0)+L_{sk} \frac{1}{N}\sum_{i=1}^N \left \|\phi_{i}^N(\tau) \right \| \d \tau \Lleq \\
&\left(Q_{sk}(0)+ML_{sk} \right)T=:C,
\end{align*}
so we can use the functional strong law of large numbers for the Poisson process:
\begin{align*}
& \pr \left( \sup_{0 \leq t \leq T} \left |K_{sk}^N(t) \right | > \varepsilon \right) \leq \pr \left( \bar{\mathcal{L}}^N\right)+\pr \left( \sup_{0 \leq t \leq C} \left | \frac{1}{N} \mathcal{N}_{sk}(Nt)-t \right | > \varepsilon \right) \to 0
\end{align*} 
as $N\to\infty$.

Assume now (A1,A2). For some $W>0$, let $A_W$ denote the event
$$A_W:=\left\{\frac{1}{N\langle d \rangle}\sum_{i=1}^{N} d^N(i)>W \right \}.$$

From Markov's inequality, it is clear that $\pr(A_W) \leq \frac{1}{W}$, and
\begin{align*}
& \int_{0}^t \frac{1}{N}\sum_{j=1}^N Q_{sk}\left( \phi_{j}^N(\tau) \right)\xi_{k,j}^N(\tau) \d \tau \leq \int_{0}^{t} Q_{sk}(0)+L_{sk} \frac{1}{N}\sum_{i=1}^N \left \|\phi_{i}^N(\tau) \right \| \d \tau \overset{\bar{A}_W}{\leq} \\
&\left(Q_{sk}(0)+WL_{sk} \right)T=:C.
\end{align*}
Once again, we use the functional strong law of large numbers for the Poisson process:
\begin{align*}
& \limsup_{N \to \infty} \pr \left( \sup_{0 \leq t \leq T} \left |K_{sk}^N(t) \right | > \varepsilon \right) \leq \\
& \limsup_{N \to \infty }\pr \left( A_W\right)+\limsup_{N\to\infty}\pr \left( \sup_{0 \leq t \leq C} \left | \frac{1}{N} \mathcal{N}_{sk}(Nt)-t \right | > \varepsilon \right) \leq \frac{1}{W}.
\end{align*} 
$W$ can be chosen arbitrarily large, ensuring
$$\limsup_{N \to \infty} \pr \left( \sup_{0 \leq t \leq T} \left |K_{sk}^N(t) \right | > \varepsilon \right)= 0.$$
\end{proof}

With Lemmas \ref{l:5} and \ref{l:6}, the bounds for $K^N(t)$ and $H^N(t)$ are in place, and the proof of Theorem \ref{t:1} concludes identically to the proof of Theorem \ref{t:simple} using Gr\"onwall. $\Box$

\section{Conclusion}
\label{s:concl}

The paper provides rigorous proof for the mean-field convergence of local density-dependent Markov processes on weighted {\erg}s to the homogeneous mean-field limit in the transient regime under relatively mild density and regularity conditions. The proof is also carried out for the deterministic NIMFA process.

One natural question is how far the exponent $(1/2+\varepsilon)$ in condition A2 can be decreased to still have the same mean-field limit. This is subject to further investigation.

Other future work possibly includes other models of random graphs. In general, we conjecture that a similar mean-field limit holds for other graph models too, as long as the underlying graph structure is sufficiently dense. However, for sparse graphs, the homogeneous mean-field approximation is known to fail in certain scenarios. We expect that for sparse graphs, the mean-field limit may depend heavily on the underlying graph structure and possibly also on details of the process. This is also subject to further investigations.
\section{Appendix}
\label{s:app}

In this section we prove some properties of NIMFA.

\begin{lemma}
	\label{l:appendix}
With initial condition $z_i^N(0) \in \Delta$, the solution of \eqref{eq:deterministic} exists and is unique for all $t\geq 0$ and remains in $\Delta$.
\end{lemma}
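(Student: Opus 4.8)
The statement concerns the NIMFA system \eqref{eq:deterministic}, namely the coupled ODEs $\dot z_i^N = Q(\rho_i^N) z_i^N$ with $\rho_i^N = \frac{1}{\langle d\rangle}\sum_j a_{ij}^N z_j^N$, started from $z_i^N(0)\in\Delta$. I need to show three things: (i) a solution exists and is unique for all $t\geq 0$, and (ii) it stays in the simplex $\Delta$ for all time. Local existence and uniqueness are already granted in the text by local Lipschitz continuity of $Q$, so the real content is the forward invariance of $\Delta$ and the resulting globality.

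The plan is to first establish invariance of the two defining constraints of $\Delta$ separately: the affine constraint $\sum_s (z_i^N)_s = 1$ and the nonnegativity constraints $(z_i^N)_s\geq 0$.

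For the sum constraint, I would let $\sigma_i(t):=\sum_{s\in\mathcal S}(z_i^N(t))_s = \mathbf 1^\top z_i^N(t)$ and differentiate. Since the column sums of $Q(\rho)$ vanish by the convention $Q_{ss}(\phi)=-\sum_{k\neq s}Q_{ks}(\phi)$, i.e.\ $\mathbf 1^\top Q(\rho)=0$ for every argument $\rho$, we get $\dot\sigma_i = \mathbf 1^\top Q(\rho_i^N) z_i^N = 0$. Hence $\sigma_i(t)\equiv\sigma_i(0)=1$ for every $i$ on the interval of existence.

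For nonnegativity I would use the standard tangency/quasi-positivity argument for Kolmogorov-type ODEs. The key observation is that the off-diagonal entries $Q_{sk}(\rho)$ with $k\neq s$ are transition rates and are nonnegative, so whenever a coordinate $(z_i^N)_s$ hits $0$ while all other coordinates are still nonnegative, its derivative $\dot{(z_i^N)}_s=\sum_k Q_{sk}(\rho_i^N)(z_i^N)_k = \sum_{k\neq s}Q_{sk}(\rho_i^N)(z_i^N)_k\geq 0$ points back into the nonnegative orthant; the diagonal (negative) term drops out precisely because it is multiplied by the vanishing coordinate. This inward-pointing condition on the boundary, combined with uniqueness, gives forward invariance of $\{(z_i^N)_s\geq 0\ \forall i,s\}$. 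One clean way to make this rigorous is a Gr\"onwall-style comparison: fixing a local solution on $[0,T_0]$, the map $\rho_i^N(t)$ is continuous hence bounded, so the rates $Q_{sk}(\rho_i^N(t))$ are bounded by some constant; writing the component equation as $\dot{(z_i^N)}_s = Q_{ss}(\rho_i^N)(z_i^N)_s + (\text{nonneg. terms})$ and using the integrating factor $\exp(-\int_0^t Q_{ss}(\rho_i^N(\tau))\,\d\tau)$ shows $(z_i^N)_s(t)$ is nonnegative whenever its initial value is, since it equals a nonnegative initial term plus an integral of nonnegative quantities against a positive kernel. Here I would need to be slightly careful because the argument must be run simultaneously for all coordinates and all vertices $i$ (the nonnegativity of the ``source'' terms for coordinate $s$ depends on the other coordinates being nonnegative), so the comparison should really be phrased as a bootstrapping statement on $[0,T_0]$ rather than coordinate-by-coordinate in isolation; this coupling across coordinates is the main technical obstacle.

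Once invariance of $\Delta$ is established on the maximal interval of existence, globality follows by the standard escape-time argument: $\Delta$ is compact, the solution remains in the compact set $\Delta^N$ (one simplex per vertex) as long as it exists, $f_i(z):=Q(\rho_i^N)z_i^N$ is bounded and Lipschitz on this compact set, so the solution cannot blow up in finite time and the maximal existence interval must be $[0,\infty)$. Finally, the fact that each $z_i^N(t)\in\Delta$ is exactly the ``nice probabilistic meaning'' referenced after \eqref{eq:deterministic}: it lets each $z_i^N(t)$ be read as a probability distribution over the local state space $\mathcal S$, which is what the trivial bounds $\|z_i^N\|\leq 1$ used in the proof of \eqref{eq:yu0} rely on.
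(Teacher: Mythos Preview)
Your argument is correct, but the paper takes a different route. Rather than verifying the invariance of $\Delta$ directly via the column-sum and quasi-positivity structure of $Q$, the paper freezes the local solution $z^N$ on its interval of existence, treats $\rho_i^N(t)$ as a known continuous function of time, and then introduces an auxiliary time-inhomogeneous Markov chain $\zeta_i^N(t)$ on $\mathcal S$ with rate matrix $Q(\rho_i^N(t))$. Its marginal $v_i^N(t):=\pr(\zeta_i^N(t)\in\cdot)$ satisfies the \emph{linear} forward Kolmogorov equation $\dot v_i^N=Q(\rho_i^N(t))v_i^N$ with the same initial condition, and Gr\"onwall for this linear system forces $v_i^N\equiv z_i^N$; since $v_i^N(t)$ is a probability vector by construction, $z_i^N(t)\in\Delta$ follows for free, and global existence is then read off from compactness as you do.

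The trade-off is this: your direct approach is self-contained and makes explicit exactly which structural features of $Q$ are being used (zero column sums for the affine constraint, nonnegative off-diagonals for the orthant), at the cost of the bootstrapping subtlety you flag for the simultaneous nonnegativity of all coordinates. The paper's probabilistic detour hides that subtlety inside the well-posedness of time-inhomogeneous Markov chains, which is taken as known; it is shorter but less elementary, and of course the fact that Kolmogorov forward equations preserve $\Delta$ ultimately rests on the same quasi-positivity you exploit directly.
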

\begin{proof}
Let $N$ be fixed. Due to the right hand side of \eqref{eq:deterministic} being locally Lipschitz-continuous, we have local existence and uniqueness. Next we prove $z_i^N(t) \in \Delta$ for any $t>0$.

We create $N$ auxiliary processes $\zeta_{i}^N(t)$, which are independent time in\-ho\-mo\-ge\-neous Markov processes with time-dependent transition rate matrices $Q(\rho_i^N(t))$. We use the notation $$v_{i,s}^N(t):=\pr\left(\zeta_{i,s}^N(t)=1\right).$$

The Chapman-Kolmogorov equations for $v_{i,s}^N(t)$ are
\begin{align*}
& \frac{\d}{\d t} v_i^N(t)=Q(\rho_i^N(t))v_{i}^N(t).
\end{align*}

We claim that if $z_i^N(0)=v_i^N(0)$ for all $1\leq i\leq N$, then we also have $z_i^N(t)=v_i^N(t)$ for any $t>0$.
\begin{align*}
& \sum_{i=1}^N \|v_i^N(t)-z_i^N(t) \| \leq  \\
&\sum_{i=1}^N \|v_i^N(0)-z_i^N(0) \|+ \int_{0}^{t} \|Q(\rho_i^N(\tau))[v_i^N(\tau)-z_i^N(\tau)]  \| \d \tau \leq \\
&\sum_{i=1}^N \|v_i^N(0)-z_i^N(0) \|+ \underbrace{ \max_{i}\max_{ 0 \leq \tau \leq t}\|Q(\rho_{i}^N(\tau)) \|}_{C_t}\int_{0}^{t} \|Q(\rho_i^N(\tau))v_i^N(\tau)-z_i^N(\tau)  \| \d \tau
\end{align*}
Gr\"onwall's lemma then implies
\begin{align*}
& \sup_{0 \leq \tau \leq t}\sum_{i=1}^N \|v_i^N(\tau)-z_i^N(\tau) \| \leq \sum_{i=1}^N \|v_i^N(0)-z_i^N(0) \|e^{C_t t}=0. 
\end{align*}

Obviously $v_i^N(t)=(v_{i,s}^N(t))_{s\in\mathcal{S}}\in\Delta$ since it is a probability vector, so $z_i^N(t) \in \Delta$ also holds. But $\Delta$ is a compact set, so $Q$ is globally Lipschitz-continuous on $\Delta$, and global existence and uniqueness follows.

\end{proof}

\bibliographystyle{abbrv}
\bibliography{mf}

\end{document}